\documentclass[10pt,reqno]{amsart}
\usepackage{amssymb}
\usepackage[all]{xy}
\setlength{\textheight}{50pc} \setlength{\textwidth}{30pc}

\newtheorem{thm}{Theorem}[section]

\newtheorem{prop}[thm]{Proposition}
\newtheorem{cor}[thm]{Corollary}

\theoremstyle{definition}
\newtheorem{dfn}[thm]{Definition}

\newtheorem{ex}[thm]{Example}

\theoremstyle{remark}
\newtheorem{remark}[thm]{Remark}

\newtheorem{notation}[thm]{Notation}

\begin{document}


\title[Saturated actions by finite dimensional Hopf
$*$-algebras] {Saturated actions by finite dimensional Hopf
$*$-algebras on $C^*$-algebras}

\author[Ja A Jeong]{Ja A Jeong$^{\dagger}$}
\thanks{Research supported by KRF-ABRL-R14(2003-2008)$^{\dagger}$
 and  Hanshin University
Research Grant$^{\ddagger}$.}
\address{
Department of Mathematical Sciences and Research Institute of Mathematics\\
Seoul National University\\
Seoul, 151--747\\
Korea} \email{jajeong\-@\-snu.\-ac.\-kr }
\author[Gi Hyun Park]{Gi Hyun Park$^{\ddagger}$}

\address{
Department of Mathematics\\
Hanshin University\\
Osan, 447--791\\
Korea}
\email{ghpark\-@\-hanshin.\-ac.\-kr }


\begin{abstract} If a finite group action $\alpha$
on a unital $C^*$-algebra $M$ is saturated, the canonical
conditional expectation $E:M\to M^\alpha$ onto the fixed point
algebra is known to be of index finite type  with $Index(E)=|G|$
in the sense of Watatani. More generally if a finite dimensional
Hopf $*$-algebra $A$ acts on $M$ and the action is saturated, the
same is true with $Index (E)=\dim(A)$. In this paper we prove that
the converse is true. Especially in case $M$ is a commutative
$C^*$-algebra $C(X)$ and $\alpha$ is a finite group action, we
give an equivalent condition in order that  the expectation
$E:C(X)\to C(X)^\alpha$ is of index finite type, from which we
obtain that $\alpha$ is saturated if and only if $G$ acts freely
on $X$.
 Actions by compact groups are also considered to
  show that the gauge action $\gamma$ on a graph
  $C^*$-algebra $C^*(E)$ associated with a locally finite directed
  graph $E$  is   saturated.
\end{abstract}

\maketitle

\setcounter{equation}{0}

\section{Introduction}

  It is known   \cite{Ro} that if
 $\alpha$ is an action by a compact group $G$ on a
$C^*$-algebra $M$, the fixed point algebra $M^\alpha$ is
isomorphic to a hereditary subalgebra $e(M\times_\alpha G)e$ of
the crossed product $M\times_\alpha G$ for a projection $e$  in
the multiplier algebra of $M\times_\alpha G$.
 If $e(M\times_\alpha G)e$ is  full in
 $M\times_\alpha G$ (that is, $e(M\times_\alpha G)e$
 generates $M\times_\alpha G$ as a closed two-sided ideal),
 the action $\alpha$ is said to be saturated
 (the notion of saturated action was introduced
 by Rieffel \cite[Chap.7]{Ph1}). Every action $\alpha$  with a
 simple crossed product $M\times_\alpha G$ is obviously saturated.

  On the other hand, an action of a
  finite dimensional Hopf $*$-algebra $A$ on a unital $C^*$-algebra
  $M$ is considered in \cite{SP}  and it is shown that if the action is
  saturated, the canonical
conditional expectation $E :M\to M^A$ onto the fixed point algebra
$M^A$ is of index finite type in the sense of Watatani \cite{Wa}
and   $Index(E)=(\dim A) 1$.
 The main purpose of the present  paper is to prove that the
 converse is also true.
 We see from our result that for an action $\alpha$ by a finite group $G$,
 $\alpha$ is saturated if and only if the canonical expectation
 $E:M\to M^\alpha$ is of index-finite type with index
 $Index(E)=|G|$.

 Besides, we consider actions by compact groups
 to study the saturation property of a gauge action $\gamma$
 on a $C^*$-algebra $C^*(E)$ associated with a locally finite directed
 graph $E$ with no sinks or sources.
 This paper is organized as follows.

 In section 2, we  review  the $C^*$-basic construction  from
 \cite{Wa} and finite dimensional Hopf $*$-algebras
 from \cite{SP} setting up our notations. Then
   we prove in section 3
that if $A$ is a finite dimensional Hopf $*$-algebra acting on a
unital $C^*$-algebra $M$ such that $E:M\to M^A$ is of index finite
type with $Index(E)=(dim
 A)1$, then the action is saturated (Theorem~\ref{thm-hopf}).

In section 4, we deal with the crossed product $M\times_\alpha G$
by a finite group in detail and give other equivalent conditions
in order that $\alpha$ be saturated. From the conditions one
easily see that an action with the Rokhlin property \cite{Iz} is
always saturated.
 Also we shall show that if $M$ has the cancellation,  an action with
 the tracial Rokhlin property \cite{OP} on $M$ is  saturated.

 Note that even for an action $\alpha$ by the finite group $\mathbb Z_2$,
 the expectation $E:M\to M^\alpha$ may not be of index
 finite type in general \cite[Example 2.8.4]{Wa}.
   For a commutative  $C^*$-algebra $C(X)$ and a finite group action
   $\alpha$, we give a necessary and sufficient
   condition that $E:C(X)\to C(X)^\alpha$ is of index finite
 type  (Theorem~\ref{thm-comm}) and provide a formula for $Index(E)$.
 Then as a corollary we obtain that $\alpha$ is saturated if and only if
  $G$ acts freely on $X$.

In section 5, we consider a compact group action $\alpha$  and
investigate the ideal $\mathcal J_\alpha$ of $M\times_\alpha G$
generated by the hereditary subalgebra $e(M\times_\alpha G)e$.
Then we apply the result on $\mathcal J_\alpha$ to the gauge
action on a graph $C^*$-algebra in section 6. As a generalization
of the Cuntz-Krieger algebras \cite{CK}, the  class of graph
$C^*$-algebras $C^*(E)$ associated with directed graphs $E$ has
been studied in various directions by considerably many authors
(for example see the bibliography in the book \cite{Ra} by
Raeburn).
 In \cite{KP}, Kumjian and Pask
show among others that if $\gamma$ is the gauge action on
$C^*(E)$,  then $C^*(E)^\gamma$ is stably isomorphic to the
crossed product $C^*(E)\times_\gamma \mathbb T$, which was done by
hiring the notions of skew product of graphs and groupoid
$C^*$-algebras.   In Theorem~\ref{thm-main-2} we shall directly
show that the gauge action is actually saturated (this implies
that $C^*(E)^\gamma$ and $C^*(E)\times_\gamma \mathbb T$ are
stably isomorphic).

\vskip 1pc

\section{Preliminaries}

\vskip .5pc   \noindent {\bf Watatani's index theory for
$C^*$-algebras}. In \cite{Wa}, Watatani developed the index theory
for $C^*$-algebras, and here we briefly review the basic
construction $C^*(B,e_A)$. Let $B$ be a $C^*$-algebra and $A$ its
$C^*$-subalgebra containing the unit of $B$. Let $E:B\to A$ be a
faithful conditional expectation. If there exist finitely many
elements $\{v_i\}_{i=1}^n$ in $B$ satisfying the following
$$b=\sum_i E(bv_i)v_i^*=\sum_i v_iE(v_i^*b), \ \text{ for every }
b\in B,$$ $E$ is said to be of \emph{index-finite type} and
$\{(v_i,v_i^*)\}_{i=1}^n$ is called a \emph{quasi-basis} for $E$.
The positive element $\sum_i v_iv_i^*$ is then the \emph{index} of
$E$, $Index(E)$, which is known to be an element in the center of
$B$ and does not depend on the choice of quasi-bases for $E$
(\cite[Proposition 1.2.8]{Wa}).
 Let $\mathcal B$ be the completion of the pre-Hilbert module
 $\mathcal B_0=\{\eta(b)\mid b\in B\}$ over $A$ with an
 $A$-valued inner product
 $$\langle \eta(x),\eta(y)\rangle = E(x^*y),\ \eta(x),\eta(y)\in
 \mathcal B_0.$$
 Let $\mathcal L_A(\mathcal B)$  be the $C^*$-algebra of all (right)
 $A$-module homomorphisms on $\mathcal B$ with
 adjoints. For $T\in \mathcal L_A(\mathcal B)$, the norm
 $\|T\|=\sup\{\|Tx\|:  \|x\|=1\}$ is always bounded.
 Each $b\in  B$ is regarded as an  operator $L_b$ in
  $\mathcal L_A(\mathcal B)$
 defined by $L_b(\eta(x))=\eta(bx)$ for $\eta(x)\in \mathcal B_0$.
 By  $e_A:\mathcal  B\to  \mathcal B$  we denote the  projection
 in $\mathcal   L_A(\mathcal B)$ such that
 $e_A(\eta(x))=\eta(E(x))$,
 $\eta(x)\in  \mathcal B_0$. Then the \emph{$C^*$-basic construction}
 $C^*(B,e_A)$ is the $C^*$-subalgebra of $\mathcal L_A(\mathcal B)$
 in which the linear span of elements $L_b e_A L_{b'}$ ($b,b'\in B$) is
 dense.

\vskip 1pc

\noindent {\bf Finite dimensional Hopf $*$-algebras.} As in
\cite{SP}, a finite dimensional  Hopf $*$-algebra is a finite
matrix pseudogroup of \cite{Wo}. We review from \cite{SP} the
definition and some  basic properties of a finite dimensional Hopf
$*$-algebra which we need in the following section.

\begin{dfn}{\rm (\cite[Proposition 2.1]{SP})} A finite
dimensional $C^*$-algebra is called a \emph{finite dimensional
 Hopf $*$-algebra} if there exist three linear maps,
$$\Delta: A\to A\otimes A,\ \ \epsilon: A\to \mathbb C,\ \text{ and }S:
A\to A$$ which satisfy the following properties
\begin{enumerate}
\item[(i)] $\Delta$(comultiplication) and $\epsilon$(counit) are
$*$-homomorphisms, and $S$(antipode) is a $*$-preserving
antimultiplicative involution,
 \item[(ii)] $\Delta(1)=1\otimes 1$, $\epsilon(1)=1$, $S(1)=1$,
 \item[(iii)] $(\Delta\otimes id)\Delta=(id\otimes \Delta)\Delta$,
 \item[(iv)] $(\epsilon\otimes id)\Delta=\Delta(\epsilon\otimes id)$,
 \item[(v)] $m(S\otimes id)(\Delta(a))=\epsilon(a)1=m(id\otimes S)(\Delta(a))$ for
  $a\in  A$, where $m:A\otimes A\to A$ is the multiplication.\end{enumerate}
\end{dfn}

\vskip 1pc

\begin{prop}{\rm (\cite{SP}, \cite{Wo})}\label{prop-tau-e}
Let $A$ be a finite dimensional Hopf $*$-algebra. Then  the
following properties hold.
 \begin{enumerate}

\item[(i)] For $a\in A$, with the notation   $\Delta(a) =\sum_i
a_i^L\otimes a_i^R,$   we have
\begin{align*}
 \sum_i \epsilon(a_i^L)a_i^R & = a = \sum_i \epsilon(a_i^R)a_i^L,\\
 \sum_i a_i^L S(a_i^R) & = \,\epsilon(a) 1 = \sum_i
S(a_i^L)a_i^R,\\
 \sum_i a_i^R S(a_i^L) &= \,\epsilon(a) 1 = \sum_i
S(a_i^R)a_i^L.
\end{align*}

\item[(ii)] There is a unique normalized trace (called the Haar
trace) $\tau$ on $A$ such that
$$ \sum_i\tau(a_i^L)a_i^R =\tau(a)1= \sum_i\tau(a_i^R)a_i^L ,\ a\in
A.$$

\item[(iii)] There exists a minimal central projection $e\in A$
(called the distinguished projection) such that $ae=\epsilon(a)e$,
$a\in A$. We have $$\epsilon(a)=1,\ S(e)=e, \text{ and }\
\tau(e)=(\dim A)^{-1}.$$

\end{enumerate}\end{prop}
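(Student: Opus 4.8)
The plan is to derive the three statements from axioms (i)--(v) of the definition together with two structural facts about the finite-dimensional $C^*$-algebra $A$: it is semisimple, and (as recorded in the definition) its antipode is an involution, $S^2=\mathrm{id}$. Part (i) is then purely formal. Its first line is axiom (iv) rewritten in the notation $\Delta(a)=\sum_i a_i^L\otimes a_i^R$, and its second line is axiom (v) in the same notation. For the third line I would apply $S$ to the two identities of axiom (v): since $S$ is linear, antimultiplicative, fixes $1$ and squares to the identity, $S\big(\sum_i a_i^L S(a_i^R)\big)=\sum_i S^2(a_i^R)\,S(a_i^L)=\sum_i a_i^R S(a_i^L)$, and this must equal $S(\epsilon(a)1)=\epsilon(a)1$; the companion identity comes out the same way. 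I would also record here the standard consequence $\epsilon\circ S=\epsilon$ (apply $\epsilon$ to $m(\mathrm{id}\otimes S)\Delta(a)=\epsilon(a)1$ and use that $\epsilon$ is a homomorphism together with the counit axiom), as it is used below.

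For Part (ii) the idea is to pass to the dual. Because $A$ is a finite-dimensional $C^*$-Hopf algebra with $S^2=\mathrm{id}$, the dual space $\widehat A=A^*$, equipped with the convolution product dual to $\Delta$, the coproduct dual to $m$, the antipode dual to $S$, and the involution $\langle f^*,a\rangle=\overline{\langle f,S(a)^*\rangle}$, is again a finite-dimensional $C^*$-Hopf algebra (the relation $S^2=\mathrm{id}$ is precisely what makes that involution an involution). In any finite-dimensional Hopf algebra the space of left integrals is one-dimensional, in particular nonzero; choosing a nonzero left integral $\ell\in\widehat A$, one checks that $\ell\ell^*$ is a two-sided integral (left invariance is immediate, and $\ell^*$ is a right integral because applying the involution to the left-integral equation produces the right-integral equation), and $\ell\ell^*\ne 0$ since $\widehat A$ is a $C^*$-algebra; hence $\widehat A$ is unimodular and its integrals form a one-dimensional space spanned by the positive element $\ell\ell^*$. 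Normalising this element to a state $\tau$ on $A$ (positivity of the resulting functional being part of the standard $C^*$-picture, and $\tau(1)\ne 0$), the left- and right-integral equations in $\widehat A$ unwind to exactly the two displayed invariance identities, while one-dimensionality yields uniqueness among normalised functionals satisfying them. Finally the invariance identities force the standard modular relation $\tau(ab)=\tau(b\,S^2(a))$, so $S^2=\mathrm{id}$ makes $\tau$ a trace.

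For Part (iii) I would take $e$ to be the two-sided integral of $A$ itself (unique up to scalar by the unimodularity argument above applied to $A$), chosen positive and rescaled so that $\epsilon(e)=1$; this is possible because $A$ is semisimple, so $\epsilon$ does not annihilate a nonzero integral, and $\epsilon(e)\ge 0$. From $ae=\epsilon(a)e=ea$ one gets $e^2=e$ on putting $a=e$, so $e$ is a projection, and the same relation makes $e$ central. To see $e$ is minimal central, write $e=\sum_k ep_k$ over the minimal central projections $p_k$ of $A$; then $p_ke=\epsilon(p_k)e$ together with $\epsilon(p_k)\in\{0,1\}$ and $\sum_k\epsilon(p_k)=\epsilon(1)=1$ forces $e=ep_{k_0}$ for the unique $k_0$ with $\epsilon(p_{k_0})=1$, and since $\epsilon$ restricts to a nonzero character on the matrix block $p_{k_0}A$ that block must be one-dimensional, whence $e=p_{k_0}$ is minimal central and $eA\cong\mathbb C$; the relation $ae=\epsilon(a)e$ then also yields $\epsilon(e)=1$ directly. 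Next $S(e)$ is again a two-sided integral, so $S(e)=\mu e$, and applying $\epsilon$ with $\epsilon\circ S=\epsilon$ gives $\mu=1$, i.e. $S(e)=e$. For the value $\tau(e)$ I would invoke the structural description of the Haar trace of a finite-dimensional $C^*$-Hopf algebra as the normalised trace of the left regular representation, $a\mapsto(\dim A)^{-1}\,\mathrm{Tr}(L_a)$ (equivalently, the Haar trace carries weight $n_j^2/\dim A$ on the $j$-th matrix block): granting this, uniqueness in Part (ii) identifies that functional with $\tau$, and since $L_e$ is the idempotent of $A$ with range $eA\cong\mathbb C$ we obtain $\tau(e)=(\dim A)^{-1}\,\mathrm{Tr}(L_e)=(\dim A)^{-1}\dim(eA)=(\dim A)^{-1}$.

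The step I expect to be the real obstacle is the construction and uniqueness of the Haar trace in Part (ii), together with the regular-representation description used to evaluate $\tau(e)$: the antipode identities in (i) and the identification of $e$ with the central support of the counit are purely formal, but producing the Haar functional, establishing its positivity and uniqueness, and pinning down its weights are the genuine content, and for these finer points I would ultimately lean on the treatments in \cite{Wo} and \cite{SP}.
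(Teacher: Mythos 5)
The paper itself offers no proof of this proposition: it is quoted as background from \cite{SP} and \cite{Wo}, so the only meaningful comparison is with those sources. Your part (i) is correct and complete: the first two displayed lines are the counit and antipode axioms in Sweedler-type notation, and the third follows by applying $S$ and using that $S$ is an antimultiplicative involution, as you say; the identity $\epsilon\circ S=\epsilon$ is also obtained correctly. The algebraic portions of part (iii) (idempotency and centrality of $e$ from $ae=\epsilon(a)e=ea$, minimality via the fact that $\epsilon$ is a character which is nonzero on exactly one matrix block, and $S(e)=e$ from uniqueness of integrals plus $\epsilon\circ S=\epsilon$) are likewise fine.

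The genuine gap is in part (ii), at the step where you pass from positivity of $\ell\ell^*$ as an element of the $C^*$-algebra $\widehat A$ to positivity of the corresponding functional $\tau$ on $A$, which you dismiss as ``part of the standard $C^*$-picture.'' That inference is false in general: for $A=C(G)$ one has $\widehat A=\mathbb C[G]$, and positivity of an element of $\mathbb C[G]$ means positive-definiteness of the associated function on $G$, not positivity of the associated measure on $C(G)$. Concretely, for $G=\mathbb Z_2$ and $f=\delta_\iota-\delta_g$ one gets $ff^*=2(\delta_\iota-\delta_g)\ge 0$ in $\mathbb C[\mathbb Z_2]$, yet as a functional on $C(\mathbb Z_2)$ it is not positive. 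So positivity of the Haar functional is exactly the hard content, and your route does not supply it; Woronowicz obtains it by constructing the Haar state as a limit of Ces\`aro means of convolution powers of a faithful state, where positivity is automatic. There is also a circularity hazard in your setup: the assertion that $\widehat A$ with the dual operations is again a $C^*$-Hopf algebra is itself normally proved by representing $\widehat A$ on the GNS space of the Haar state of $A$, so it cannot serve as the foundation for constructing that state unless imported wholesale from \cite{Wo}. Finally, the evaluation $\tau(e)=(\dim A)^{-1}$ rests on the unproved structural theorem that the Haar trace is the normalized trace of the left regular representation (weights $n_j^2/\dim A$). Since you explicitly defer these points to \cite{Wo} and \cite{SP}, your argument in the end does what the paper does---cite the literature---plus a sound proof of (i) and of the formal parts of (iii); but as a self-contained proof of (ii) and of $\tau(e)=(\dim A)^{-1}$ it does not go through as written.
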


\vskip 1pc

\section{Actions by finite dimensional Hopf $*$-algebras}

\noindent Throughout this section $A$ will be a finite dimensional
Hopf $*$-algebra.
 An \emph{action}  of $A$ on a unital $C^*$-algebra $M$ is
 a bilinear map $\cdot :A\times M\to M$ such that  for $a,b\in A$, $x,y\in M$,
\begin{align*} 1\cdot x& =x,\\
a\cdot 1 &=\epsilon(a)1,\\
ab\cdot x &=a\cdot(b\cdot x), \\
a\cdot xy &= \sum_i (a_i^L\cdot x)(a_i^R\cdot y),\\
(a\cdot x)^* &=S(a^*)\cdot x^*.\end{align*}
  Then the \emph{crossed product}
$M\rtimes A$ is the algebraic
 tensor product $M\otimes A$ as a vector space with the following
 multiplication and $*$-operation:
 \begin{align*} (x\otimes a)(y\otimes b)&:= \sum_j x(a_i^L\cdot
 y)\otimes a_i^Rb,\\
 (x\otimes a)^* &:= \sum_i (a_i^L)^*\cdot x^*\otimes
 (a_i^R)^*.
\end{align*}
Identifying  $a\in A$ with  $1\otimes a$ and   $x\in M$ with
$x\otimes 1$, we see \cite{SP} that $$M\rtimes A=span\{xa\mid x\in
M, \, a\in A\}.$$

\vskip 1pc For the definition of saturated action of $A$ on $M$,
refer to section 4 of \cite{SP}.

\vskip 1pc
\begin{prop}{\rm (\cite{SP})}\label{prop-hopf}
\label{hopf} Let  $M^A=\{x\in M\mid a\cdot x=\epsilon(a)x,\
\text{for all }a\in A\}$ be the  fixed point algebra  for the
action of $A$ on a unital $C^*$-algebra  $M$.
\begin{enumerate}
\item[(i)] The action
 is  saturated  if and only if $$M\rtimes
A=span\{xey\mid x,y\in M\},$$ where $e\in A$ is the distinguished
projection.

\item[(ii)] The map $E:M\to M^A$, $E(x)=e\cdot x$,  is a faithful
conditional expectation onto the fixed point algebra such that
$$E((a\cdot x)y)=E(x(S(a)\cdot y)), \ \ a\in A, \ x,y\in M.$$

\item[(iii)] The linear map $F: M\rtimes A \to M$,
$F(xa)=\tau(a)x$, is a faithful conditional expectation onto $M$.
\end{enumerate}
\end{prop}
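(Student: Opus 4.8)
The plan is to verify each of the three claims in Proposition~\ref{prop-hopf} directly from the defining axioms of the action and the basic properties of the Haar trace $\tau$ and the distinguished projection $e$ recorded in Proposition~\ref{prop-tau-e}.

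\textbf{Part (ii).} First I would check that $E(x)=e\cdot x$ lands in $M^A$. For $a\in A$, using the axiom $ab\cdot x=a\cdot(b\cdot x)$ together with the property $ae=\epsilon(a)e$ of the distinguished projection, one gets $a\cdot E(x)=a\cdot(e\cdot x)=(ae)\cdot x=\epsilon(a)(e\cdot x)=\epsilon(a)E(x)$, so indeed $E(x)\in M^A$. Next, $E$ restricts to the identity on $M^A$: if $x\in M^A$ then $e\cdot x=\epsilon(e)x=x$ since $\epsilon(e)=1$. The module property $E(yxz)=yE(x)z$ for $y,z\in M^A$ follows from the multiplicativity axiom $a\cdot(xy)=\sum_i(a_i^L\cdot x)(a_i^R\cdot y)$ applied to $a=e$, using $\Delta(e)=\sum e_i^L\otimes e_i^R$ and the fact that, because $e$ is central with $ae=\epsilon(a)e$, the "tensor legs'' of $\Delta(e)$ collapse appropriately against elements of $M^A$. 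Positivity and faithfulness come from writing $E$ via the Haar state; concretely one shows $E(x^*x)=e\cdot(x^*x)\ge 0$ and that $E(x^*x)=0$ forces $x=0$, which is essentially the statement that the Haar trace is faithful. For the symmetry identity $E((a\cdot x)y)=E(x(S(a)\cdot y))$, I would expand $E((a\cdot x)y)=e\cdot((a\cdot x)y)$, insert $e\cdot(a\cdot x)=(ea)\cdot x$ and rewrite $ea$ using the counit/antipode relations in Proposition~\ref{prop-tau-e}(i); the identity $\sum_i a_i^L S(a_i^R)=\epsilon(a)1$ and the multiplicativity axiom let one transfer the action of $a$ across the product at the cost of applying $S$ to the second variable. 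This is the one genuinely fiddly computation.

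\textbf{Part (iii).} For $F:M\rtimes A\to M$, $F(xa)=\tau(a)x$, I would first confirm $F$ is well defined and $M$-linear in the obvious sense, then check it is a conditional expectation onto $M=M\otimes 1$: since $\tau(1)=1$ we get $F(x\otimes 1)=x$. The key point is the bimodule property $F((x\otimes 1)\cdot\xi\cdot(y\otimes 1))=xF(\xi)y$, which reduces to computing $(x\otimes 1)(y\otimes a)(z\otimes 1)$ in the crossed product and applying $F$; here the trace property of $\tau$ and its Haar invariance $\sum_i\tau(a_i^L)a_i^R=\tau(a)1$ (Proposition~\ref{prop-tau-e}(ii)) are exactly what make the $A$-part evaluate correctly. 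Positivity of $F$ follows from expanding $(x\otimes a)^*(x\otimes a)=\sum_i(\cdots)\otimes(\text{product in }A)$ and using that $\tau$ is a trace with $\tau\ge 0$ on positive elements, combined with the action axiom $(a\cdot x)^*=S(a^*)\cdot x^*$; faithfulness uses faithfulness of $\tau$ on the finite dimensional $C^*$-algebra $A$.

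\textbf{Part (i).} For the characterization of saturation, I would use the definition of saturated action from \cite[\S4]{SP} and the fact (from Proposition~\ref{prop-tau-e}(iii)) that $e$ is a \emph{minimal} central projection with $ae=\epsilon(a)e$. The inclusion $span\{xey\mid x,y\in M\}\subseteq M\rtimes A$ is trivial; the content is that the action is saturated precisely when this span is everything. Since $M\rtimes A=span\{xa\mid x\in M,a\in A\}$, one direction amounts to showing that each generator $xa$ can be approximated by (or written in terms of) elements $xey$ exactly when the saturation condition holds, and here I expect the argument to mirror the compact-group picture from the introduction: $e(M\rtimes A)e$ being full is equivalent to the two-sided ideal it generates being all of $M\rtimes A$, and one rewrites that ideal as $span\{xey\}$ using that $M$ absorbs $A$ from both sides via the crossed product relations $a y = \sum_i (a_i^L\cdot y)a_i^R$.

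The main obstacle I anticipate is Part (ii): keeping the Sweedler-type bookkeeping straight when transferring the $A$-action across products and correctly invoking the correct one of the several antipode/counit identities in Proposition~\ref{prop-tau-e}(i). Parts (i) and (iii) are more structural and should follow cleanly once the crossed-product multiplication is unwound, with faithfulness in every case ultimately resting on faithfulness of the Haar trace $\tau$.
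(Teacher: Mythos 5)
The paper does not actually prove this proposition: it is imported verbatim from \cite{SP} (the relevant statements are Propositions 2.12, 3.4 and 4.x there), so there is no in-paper argument to compare against. Judged on its own terms, your outline points in the same general direction as \cite{SP} (everything is reduced to the counit/antipode identities, the invariance of the Haar trace, and the absorption property of the distinguished projection), but several of the steps you wave at are exactly where the real work lies, and as written they do not yet constitute a proof.

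Concretely: (a) For (iii), positivity and faithfulness of $F$ do not follow by ``expanding $(x\otimes a)^*(x\otimes a)$ and using $\tau\ge 0$'': the $*$-structure and multiplication of $M\rtimes A$ are twisted, so $F=\mathrm{id}\otimes\tau$ being positive on the C*-algebra $M\otimes A$ says nothing directly about the crossed product; one needs to realize $M\rtimes A$ concretely (e.g.\ on a Hilbert module built from $\tau$, or inside $M\otimes B(L^2(A,\tau))$) so that $F$ becomes a compression. (b) For the symmetry formula in (ii), expanding $E((a\cdot x)y)=\sum_i((e_i^L a)\cdot x)(e_i^R\cdot y)$ shows that what you actually need is the exchange identity $\Delta(e)(a\otimes 1)=\Delta(e)(1\otimes S(a))$ for the distinguished projection; this does not drop out of $\sum_i a_i^LS(a_i^R)=\epsilon(a)1$ by ``transferring the action across the product'' — it requires its own derivation from $ae=\epsilon(a)e$, coassociativity and the antipode axioms. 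Relatedly, the module property $E(yxz)=yE(x)z$ collapses the legs of $\Delta(e)$ because $y,z\in M^A$ satisfy $a\cdot y=\epsilon(a)y$, not because $e$ is central. (c) For (i) you never state the actual definition of saturation from \cite{SP} (fullness of $M$ as a Hilbert module, i.e.\ density of the span of the $(M\rtimes A)$-valued inner products, which are of the form $x^*ey$ up to a scalar); without pinning that down, your argument only establishes that $span\{xey\}$ is the (two-sided) ideal generated by $e(M\rtimes A)e$ — using $ax=\sum_i(a_i^L\cdot x)a_i^R$ and $(a\cdot x)e=axe$ — plus the standard fact that a dense ideal of a unital C*-algebra is everything; the identification of that fullness condition with saturation is the content being quoted, not a triviality. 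Faithfulness of $E$ in (ii) is cleanest deduced from (iii) via $ex^*xe=E(x^*x)e$ and $F(xe)=\tau(e)x$, rather than attributed directly to faithfulness of $\tau$.
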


\vskip 1pc \noindent Recall that $\mathcal M_0:=M$ is an
$M^A$-valued inner product module by $$\langle
\eta(x),\eta(y)\rangle_{M^A}=E(x^*y)$$ (here we use the
 convention  in \cite{Wa} for the inner product as in section 2).
 Since every norm bounded $M^A$-module map on $\mathcal M_0$
 extends uniquely to the Hilbert $M^A$-module $\mathcal M$,
 we may identify the $*$-algebra $End(\mathcal M_0)$ (in \cite{SP}) of norm
bounded right $M^A$-module endomorphisms of $\mathcal M_0$ having
an adjoint  with the $C^*$-algebra $\mathcal L_{M^A}(\mathcal M)$
explained in section 2.

\vskip 1pc \begin{remark}\label{compact}(\cite[Proposition
1.3.3]{Wa})  If $E:M\to M^A$ is of index-finite type, then
$$C^*(M,e_{M^A})= span\{L_x e_{M^A} L_y\mid
 x,y\in M\}=\mathcal L_{M^A}(\mathcal M).$$  In fact, we
 see  from the proof of  \cite[Proposition 2.1.5]{Wa}
 that $C^*(M,e_{M^A})$ contains the unit of
 $\mathcal L_{M^A}(\mathcal  M)$. Thus the ideal $span\{L_x e_{M^A} L_y\mid
 x,y\in M\}$ which is dense in $C^*(M,e_{M^A})$ must contain the unit of
 $\mathcal L_{M^A}(\mathcal M)$.

\end{remark}

\vskip 1pc
\begin{thm}\label{thm-hopf} Let $A$ be a finite dimensional Hopf $*$-algebra
acting on a unital $C^*$-algebra $M$. Then the following are
equivalent:
 \begin{enumerate} \item[(i)] The action is saturated.

 \item[(ii)] $E:M\to M^A$ is of index finite type with $Index(E)=(dim
 A)1$.
\end{enumerate}
\end{thm}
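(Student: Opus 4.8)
The plan is to establish both implications through the $C^*$-basic construction, using the distinguished projection $e\in A$ as the bridge between the Hopf-algebraic ``saturated'' condition of Proposition~\ref{prop-hopf}(i) and Watatani's index-finite-type condition. For the implication (i)$\Rightarrow$(ii), I would first exhibit a natural $*$-homomorphism $\Phi:M\rtimes A\to C^*(M,e_{M^A})\subseteq\mathcal L_{M^A}(\mathcal M)$ sending $xa\mapsto L_x(a\cdot{}-)$, and in particular sending the distinguished projection $e\in A$ to the Jones projection $e_{M^A}$; this uses that $E(x)=e\cdot x$ from Proposition~\ref{prop-hopf}(ii), so that $e$ acting on $\mathcal M_0$ is exactly $\eta(x)\mapsto\eta(E(x))=e_{M^A}\eta(x)$. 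Granting saturation, $M\rtimes A=\mathrm{span}\{xey\mid x,y\in M\}$, so $\Phi$ maps onto $\mathrm{span}\{L_x e_{M^A}L_y\}$, which is dense in $C^*(M,e_{M^A})$. The key point is then that $\Phi$ is in fact an isomorphism onto $C^*(M,e_{M^A})$; here I expect to invoke the faithfulness of the conditional expectation $F:M\rtimes A\to M$ of Proposition~\ref{prop-hopf}(iii) together with the dual expectation on $C^*(M,e_{M^A})$ (the ``$E'$'' of Watatani's theory, with $E'(L_x e_{M^A}L_y)$ proportional to $xy$) to see that $\Phi$ intertwines the two faithful expectations down to $M$, hence is injective. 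Once $\Phi$ is an isomorphism carrying $e$ to $e_{M^A}$, I transport the structure of $M\rtimes A$ back: a quasi-basis and the index value are read off from the known Hopf-algebraic data. Concretely, picking a basis of $A$ adapted to $e$ and using Proposition~\ref{prop-tau-e}(iii) ($\tau(e)=(\dim A)^{-1}$), one produces elements $\{v_i\}\subset M$ with $\sum_i L_{v_i}e_{M^A}L_{v_i^*}=1$ and $\sum_i E(xv_i)v_i^*=\sum_i v_i E(v_i^*x)=x$, and computes $Index(E)=\sum_i v_iv_i^*=(\dim A)1$ --- the normalization falling out precisely from $\tau(e)=(\dim A)^{-1}$.

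For the converse (ii)$\Rightarrow$(i), I would argue contrapositively via the same homomorphism $\Phi:M\rtimes A\to\mathcal L_{M^A}(\mathcal M)$. Index-finite type gives, by Remark~\ref{compact}, that $C^*(M,e_{M^A})=\mathcal L_{M^A}(\mathcal M)$ contains the unit and equals $\mathrm{span}\{L_x e_{M^A}L_y\}$; I want to conclude that $\Phi$ is surjective and that $M\rtimes A=\mathrm{span}\{xey\}$, i.e. saturation. The surjectivity of $\Phi$ onto the basic construction is automatic once one knows $\Phi(e)=e_{M^A}$ and $\Phi(M)=\{L_x\}$. For the reverse inclusion $\mathrm{span}\{xey\}\supseteq M\rtimes A$, the leverage is the index value: the hypothesis $Index(E)=(\dim A)1$ matches the value forced on the crossed product by the Haar trace, so the natural trace-preserving-type estimates cannot leave any ``room'' outside $\mathrm{span}\{xey\}$. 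More carefully, I would compare $\dim$-type or trace-type invariants: $F$ restricted to $M\rtimes A$ and the dual expectation on $C^*(M,e_{M^A})$ agree under $\Phi$ up to the factor $Index(E)/(\dim A)$, which the hypothesis makes equal to $1$; faithfulness of $F$ then upgrades the inclusion $\mathrm{span}\{xey\}\subseteq M\rtimes A$ to an equality, giving saturation by Proposition~\ref{prop-hopf}(i).

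The main obstacle, I expect, is proving that $\Phi$ is an \emph{isomorphism} rather than merely a homomorphism with the right behaviour on generators --- in particular its injectivity, and the clean matching of the two conditional expectations ($F$ on $M\rtimes A$ and the dual expectation on $C^*(M,e_{M^A})$) under $\Phi$. This is where the precise value $Index(E)=(\dim A)1$ (as opposed to merely index-finite type) is essential: it is exactly the normalization constant relating $\tau(e)=(\dim A)^{-1}$ to the Jones projection, and getting this bookkeeping right --- i.e. that the index computed in the basic construction equals the one predicted by the Haar trace --- is the crux. A secondary technical point is verifying that $\Phi$ is well-defined and multiplicative directly from the crossed-product relations and the action axioms, for which the identities in Proposition~\ref{prop-tau-e}(i) and the covariance property $E((a\cdot x)y)=E(x(S(a)\cdot y))$ of Proposition~\ref{prop-hopf}(ii) should suffice after a short computation. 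Once $\Phi$ is in hand as an isomorphism intertwining the expectations and sending $e\mapsto e_{M^A}$, both implications reduce to comparing the two descriptions of the same $C^*$-algebra.
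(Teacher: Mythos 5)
There is a genuine gap, and it sits exactly where you locate ``the main obstacle'': your map $\Phi:M\rtimes A\to \mathcal L_{M^A}(\mathcal M)$, $xa\mapsto L_x(a\cdot-)$, goes in the wrong direction for the implication (ii)$\Rightarrow$(i). Surjectivity of $\Phi$ (which is indeed automatic from $\Phi(e)=e_{M^A}$ and Remark~\ref{compact}) says nothing about $M\rtimes A$ unless $\Phi$ is injective, and under hypothesis (ii) injectivity of $\Phi$ is essentially \emph{equivalent} to saturation, i.e.\ to the theorem itself; without some hypothesis it is simply false (for the inner $\mathbb Z_n$-action of Example~\ref{exam-nonsat}(iii) one has $\dim(M\times_\alpha\mathbb Z_n)=4n>8=\dim\mathcal L_{M^\alpha}(\mathcal M)$ for $n\geq 3$). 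Your proposed route to injectivity --- that $\Phi$ intertwines $F$ with Watatani's dual expectation $E'$ because the normalizations $\tau(e)=(\dim A)^{-1}$ and $Index(E)=(\dim A)1$ match --- is circular: the two expectations visibly agree only on $span\{xey\}$, and the identity $E'\circ\Phi=F$ on all of $M\rtimes A$ unwinds (writing $1=\sum_iL_{u_i}e_{M^A}L_{u_i^*}$) to $\sum_i u_i\,(S(a)\cdot u_i^*)=\tau(a)\,Index(E)$ for all $a\in A$ --- in the group case exactly condition (iii) of Theorem~\ref{thm-main-1}, $\sum_i u_i\alpha_g(u_i^*)=0$ for $g\neq\iota$ --- which is an equivalent form of the conclusion, not something you may assume. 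Likewise ``faithfulness of $F$ upgrades the inclusion $span\{xey\}\subseteq M\rtimes A$ to an equality'' is not an argument: faithfulness only helps once you have produced a specific positive element of the ideal $MeM$ dominated by $1$ whose image under $F$ is $1$.

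That is precisely what the paper's proof does, by building the homomorphism in the opposite direction. Using index-finiteness and Remark~\ref{compact} one defines $\varphi:C^*(M,e_{M^A})\to M\rtimes A$, $\varphi(\sum_iL_{x_i}e_{M^A}L_{y_i})=\sum_ix_iey_i$; the nontrivial step is well-definedness (checked on the module $\mathcal M_0$ via injectivity of $\eta$ and the relation $(a\cdot x)e=axe$), after which no injectivity of anything is needed. Its range $MeM$ is an ideal of $M\rtimes A$, and $\varphi(1)=\sum_iu_ieu_i^*$ is a projection in that ideal with $F(\varphi(1))=\tau(e)\sum_iu_iu_i^*=(\dim A)^{-1}Index(E)=1$, so faithfulness of $F$ forces $\varphi(1)=1$ and hence $MeM=M\rtimes A$, i.e.\ saturation by Proposition~\ref{prop-hopf}(i). (For (i)$\Rightarrow$(ii) the paper simply cites \cite[Proposition 4.5]{SP}, so the isomorphism machinery you propose there is also unnecessary; note moreover that the dual expectation you invoke in that direction presupposes index-finiteness, the very thing to be proved.) If you want to salvage your outline, the missing concrete step is to verify directly in $M\rtimes A$, using $eze=E(z)e$ and the quasi-basis relation, that $p=\sum_iu_ieu_i^*$ is a self-adjoint idempotent; that computation, combined with $F(p)=1$, replaces all appeals to injectivity of $\Phi$.
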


\begin{proof} (i)$\Longrightarrow$ (ii) is shown in \cite[Proposition 4.5]{SP}.

\noindent (ii)$\Longrightarrow$ (i). By Remark~\ref{compact},
$C^*(M,e_{M^A})=span\{L_x e_{M^A} L_y\mid x,y\in M\}$.
 Consider a map $\varphi: C^*(M,e_{M^A})\to M\rtimes A$ given by
 $$\varphi(\sum_i L_{x_i} e_{M^A} L_{y_i})=\sum_i  x_i  e y_i.$$
 To see that $\varphi$ is well defined, let $\sum_i L_{x_i} e_{M^A}
 L_{y_i}=0$. Then for each $z\in M$,
  $$(\sum_i L_{x_i} e_{M^A} L_{y_i})(\eta(z))=
  \sum_i \eta(x_i E(y_i z))=\eta(\sum_i
 x_i(e\cdot(y_iz)))=0,$$ hence by the injectivity of $\eta$ (\cite[2.1]{Wa}),
$\sum_i  x_i(e\cdot(y_iz)) =0$ in $M$.
 Since $(a\cdot x)e=axe$ for $a\in A$, $a\in M$ (see (7) of \cite{SP}),
  we thus have
  $$\sum_i  x_i(e\cdot(y_iz))e=\sum_i(x_ie y_i)ze=0$$ in $M\rtimes
  A$ for every $z\in M$,
  which then implies that
  $$(\sum_i  x_i e y_i )(zez')=0,  \ z,z'\in M.$$
 Particulary, $(\sum_i  x_i e y_i )(\sum_i  x_i e y_i )^*=0$,
  so that $\sum_i  x_i e y_i=0$ (in $M\rtimes A$). Thus $\varphi$ is well
  defined.
  It is tedious to show that $\varphi$ is a $*$-homomorphism such
  that the range   $\varphi(C^*(M,e_{M^A}))=MeM$  is an ideal
   of $M\rtimes A$;
  if $x,y,$ and $z\in M$ and $a\in A$, then
 $$(za)(xey) =(z(a\cdot x))ey\in  MeM.$$
  Hence it suffices to show that $\varphi (1)=1$.
 If $\{(u_i,u_i^*)\}_{i=1}^n$  is a quasi-basis for $E$, then
 $$\sum_i  L_{u_i}  e_{M^A}  L_{u_i^*} (\eta(z))=\sum_i \eta(u_i E(u_i^*
 z))  =\eta(z), \ \ z\in M,$$ which means that
 $\sum_i  L_{u_i}  e_{M^A}  L_{u_i^*}=1 \in
 \mathcal L_{M^A}(\mathcal M)$.
 Therefore by Proposition~\ref{prop-tau-e}(iii) and
 Proposition~\ref{prop-hopf}(iii)
 $$
 F(\varphi(1))=F(\sum_{i=1}^n  u_i e u_i^*)=\sum_i \tau(e) u_i u_i^*
 =\frac{1}{\dim A}\sum_i u_i u_i^*=1.$$
 Since $\varphi$ is a $*$-homomorphism, $\varphi(1)$ is a
 projection in $M\rtimes A$
 such that $F(1-\varphi(1))=0$. But $F$ is faithful, and
 $\varphi(1)=1$  follows.
\end{proof}

\vskip 1pc
\section{Actions by finite groups}

Throughout this section $G$ will denote a finite group. As is well
known the group $C^*$-algebra $C^*(G)$ generated by the unitaries
$\{\lambda_g\mid g\in G\}$ is a finite dimensional Hopf
$*$-algebra  with $$\Delta(\lambda_g)= \lambda_g\otimes
\lambda_g,\ \ \epsilon(\lambda_g)= 1,\ \ S(\lambda_g)=
\lambda_{g^{-1}}  \ \text{ for }\lambda_g\in C^*(G).$$
 The Haar trace $\tau$ is  given by $\tau(\lambda_g)=\delta_{\iota
 g}$, where $\iota$ is the identity of $G$, and the distinguished
 projection   is  $e=\frac{1}{|G|}\sum_g \lambda_g $.

Let $\alpha$  be an action of  $G$ on a unital $C^*$-algebra $M$.
Then it is easy to see  that
  $$\lambda_g\cdot x:= \alpha_g(x) \ \ \text{for } g\in G,\   x\in M,$$
  defines an action of  $C^*(G)$ on $M$. Furthermore
  $M\rtimes C^*(G)$ is nothing but the usual crossed product $M\times_\alpha
 G =span\{x\lambda_g\mid x\in M,\ g\in G\}$, and
 the expectations  $E:M\to M^\alpha(=M^{C^*(G)})$, $F:M\times_\alpha G\to M$
 of Proposition~\ref{prop-hopf} are given by
\begin{eqnarray}{\label{eqn-exp-fa}}
E (x)=\frac{1}{|G|}\sum_g \alpha_g(x)\ \text{ and } \  F(\sum_g
x_g \lambda_g)=x_{\iota} \ (x, x_g\in M,\  g\in G).
\end{eqnarray}

\noindent
 Note that for each   $\sum_h
x_h \lambda_h\in M\times_\alpha G$ and  $g\in G$,
\begin{eqnarray}\label{eqn-norm} \|x_g\|
=\|F((\sum_h x_h \lambda_h) \lambda_{g^{-1}})\|\leq \|(\sum_h x_h
 \lambda_h) \lambda_{g^{-1}}\|= \|\sum_h x_h  \lambda_h\|.\end{eqnarray}

 If $\mathcal J_\alpha$  denotes  the closed
ideal of $M\times_\alpha G$ generated by the distinguished
projection $e$, then Proposition~\ref{prop-hopf}(i) says that
$\alpha$ is saturated if
 and only if $\mathcal J_\alpha=M\times_\alpha G$.
We will see in Proposition~\ref{prop-J} that
 \begin{eqnarray}\label{eqn-J-alpha}\mathcal
J_\alpha\, =\, span \{\sum_g x\alpha_g(y) \lambda_g\mid x,y\in M\}
\,  =\, span \{\sum_g x\alpha_g(x^*) \lambda_g\mid x \in
M\}.\end{eqnarray}

\noindent The $*$-homomorphism $\varphi:C^*(M, e_{M^\alpha})\to
M\times_\alpha G$ we discussed in the proof of
Theorem~\ref{thm-hopf} can be rewritten as follows.
\begin{eqnarray}\label{eqn-varphi}
\varphi(L_x e_{M^\alpha} L_y)=  \frac{1}{|G|} \sum_g
x\alpha_g(y)\lambda_g, \ x,y\in M
\end{eqnarray}
 because $\varphi(L_x e_{M^\alpha} L_y)=xey$ and
 $e= \frac{1}{|G|} \sum_g  \lambda_g$.
   If $\{(u_i, u_i^*)\}$ is a
quasi-basis for $E$, we see from   $\sum_i L_{u_i} e_{M^\alpha}
L_{u_i^*}=1$ and (\ref{eqn-varphi}) that
\begin{eqnarray}\label{eqn-quasibasis} \varphi(1)=  \sum_i  u_i
e  u_i^* = \frac{1}{|G|}\sum_g (\sum_i
u_i\alpha_g(u_i^*))\lambda_g\end{eqnarray} is a projection in
$M\times_\alpha G$. Recall that $\varphi(1)=1$ holds if $\alpha$
is saturated.

\vskip 1pc

\begin{thm}\label{thm-main-1} Let $M$ be a unital $C^*$-algebra
 and $\alpha$ be an action of a finite group $G$ on $M$.
Then the following are equivalent:

\begin{enumerate}
\item[(i)] $\alpha$ is saturated, that is, $\mathcal J_\alpha=
M\times_\alpha G$.

\item[(ii)]   $E: M\to M^\alpha$  is of index finite type with
$Index(E)=|G|$.

\item[(iii)]  $E : M\to M^\alpha$  is of index finite type with
$Index(E)=|G|$ and
  \begin{eqnarray}\label{eqn-qb-condition} \sum_i u_i\alpha_g(u_i^*)=0, \
 g\neq \iota\end{eqnarray} for a quasi-basis
   $\{(u_i,u_i^*)\}$ for $E$.

\item[(iv)] There exist $\{b^j_g\in M\mid g\in G,\ 1\leq j\leq
m\}$ for some $m\geq 1$  such that
\begin{enumerate}
\item  $\alpha_g (b^j_h)= b^j_{gh},$ for $j=1,\dots, m$ and
$g,h\in G$.

\item  $\sum_j  b^j_g (b^j_h)^* = \delta_{gh}.$
\end{enumerate}

\item[(v)] For every $\varepsilon > 0$,  there exist $\{b^j_g\in
M\mid g\in G,\ 1\leq j\leq m\}$ for some $m\geq 1$  such that
\begin{enumerate}
\item  $ \sum_j\| \alpha_g (b^j_h) - b^j_{gh}  \| <
 \varepsilon, $

\item  $ \| \sum_j  b^j_g (b^j_h)^* -\delta_{gh} \|< \varepsilon.$
\end{enumerate}

\end{enumerate}
\end{thm}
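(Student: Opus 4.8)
The plan is to prove the cycle of implications (i)$\Rightarrow$(ii)$\Rightarrow$(iii)$\Rightarrow$(iv)$\Rightarrow$(v)$\Rightarrow$(i), deferring the heart of the argument to (v)$\Rightarrow$(i).

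First, (i)$\Leftrightarrow$(ii) is already essentially contained in Theorem~\ref{thm-hopf} applied to $A=C^*(G)$, using $\dim C^*(G)=|G|$; only the direction (i)$\Rightarrow$(ii) needs the specialization and it is immediate. For (ii)$\Rightarrow$(iii), I would use equation~(\ref{eqn-quasibasis}): when $\alpha$ is saturated we know $\varphi(1)=1$, i.e. $\frac{1}{|G|}\sum_g(\sum_i u_i\alpha_g(u_i^*))\lambda_g=1=1\cdot\lambda_\iota$. Matching the coefficient of $\lambda_g$ for $g\neq\iota$ in the free module decomposition $M\times_\alpha G=\bigoplus_g M\lambda_g$ gives $\sum_i u_i\alpha_g(u_i^*)=0$, which is exactly (\ref{eqn-qb-condition}). (Here one invokes (ii)$\Rightarrow$(i), already established, to know $\varphi(1)=1$.) For (iii)$\Rightarrow$(iv), take the quasi-basis $\{(u_i,u_i^*)\}_{i=1}^m$ furnished by (iii) and set $b^j_g:=\alpha_g(u_j)$ for $g\in G$, $1\le j\le m$. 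Then (a) $\alpha_g(b^j_h)=\alpha_g(\alpha_h(u_j))=\alpha_{gh}(u_j)=b^j_{gh}$ is automatic, and (b) $\sum_j b^j_g(b^j_h)^*=\sum_j\alpha_g(u_j)\alpha_h(u_j^*)=\alpha_g\!\big(\sum_j u_j\alpha_{g^{-1}h}(u_j^*)\big)$, which by (\ref{eqn-qb-condition}) vanishes unless $g^{-1}h=\iota$; and when $g=h$ it equals $\alpha_g(\sum_j u_ju_j^*)=\alpha_g(Index(E))=\alpha_g(|G|\cdot 1)$ — wait, this needs a normalization. One should instead rescale: since $Index(E)=|G|$, replace $u_j$ by the quasi-basis elements appropriately, or more cleanly set $b^j_g:=\frac{1}{\sqrt{|G|}}\,\alpha_g(u_j)$; then $\sum_j b^j_g(b^j_h)^*=\frac{1}{|G|}\alpha_g(\sum_j u_ju_j^*)\delta_{gh}=\alpha_g(1)\delta_{gh}=\delta_{gh}$, giving (iv)(b). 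The implication (iv)$\Rightarrow$(v) is trivial: the exact data from (iv) already satisfies the approximate conditions for every $\varepsilon>0$ (both sums in (v) are literally $0$).

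The substantive step is (v)$\Rightarrow$(i), i.e. showing $\mathcal J_\alpha=M\times_\alpha G$. By (\ref{eqn-J-alpha}) (to be proven in Proposition~\ref{prop-J}) we have $\mathcal J_\alpha=\overline{\operatorname{span}}\{\sum_g x\alpha_g(y)\lambda_g\mid x,y\in M\}$, and since $\mathcal J_\alpha$ is an ideal it suffices to exhibit, for the given $\varepsilon$, an element of $\mathcal J_\alpha$ within $\varepsilon'$ of $1\in M\times_\alpha G$ (with $\varepsilon'\to0$ as $\varepsilon\to0$); then the unit lies in the closed ideal, forcing $\mathcal J_\alpha=M\times_\alpha G$. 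The natural candidate, mimicking (\ref{eqn-quasibasis}), is
\begin{equation*}
w:=\sum_{j=1}^m\Big(\sum_{g\in G}b^j_\iota\,\alpha_g\big((b^j_\iota)^*\big)\lambda_g\Big)=\sum_{j=1}^m b^j_\iota\, e\, (b^j_\iota)^*\cdot|G|,
\end{equation*}
which manifestly lies in $\operatorname{span}\{xey\mid x,y\in M\}\subseteq\mathcal J_\alpha$. The claim is that $w$ is close to $1$. Using $\alpha_g((b^j_\iota)^*)=(\alpha_g(b^j_\iota))^*\approx (b^j_g)^*$ (with total error controlled by (v)(a)), one gets $w\approx\sum_j\sum_g b^j_\iota(b^j_g)^*\lambda_g$; then for the coefficient of $\lambda_g$ one compares $\sum_j b^j_\iota(b^j_g)^*$ with $\delta_{\iota g}$. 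Here condition (v)(b) with the pair $(\iota,g)$ gives $\|\sum_j b^j_\iota(b^j_g)^*-\delta_{\iota g}\|<\varepsilon$, so every coefficient is within $\varepsilon$ of that of $1$. Summing over $g$ and tracking the propagation of errors through the triangle inequality and the norm estimate (\ref{eqn-norm}), one obtains $\|w-1\|\le C_{|G|,m}\,\varepsilon$ for a constant depending only on $|G|$ and $m$; letting $\varepsilon\to0$ places $1$ in the closed ideal $\mathcal J_\alpha$.

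The main obstacle is bookkeeping in (v)$\Rightarrow$(i): one must be careful that the number $m$ of elements may depend on $\varepsilon$, so the error constant $C_{|G|,m}$ cannot be allowed to blow up with $m$. This is handled by estimating the $\lambda_g$-coefficient $\sum_j b^j_\iota(b^j_g)^*$ as a \emph{single} norm quantity (not term-by-term in $j$): the perturbation replacing $b^j_g$ by $\alpha_g(b^j_\iota)$ is estimated by $\sum_j\|\alpha_g(b^j_\iota)-b^j_g\|\cdot\sup_j\|b^j_\iota\|$, and one observes from (v)(b) with $g=h=\iota$ that $\|\sum_j b^j_\iota(b^j_\iota)^*-1\|<\varepsilon$, hence $\sup_j\|b^j_\iota\|^2\le\|\sum_j b^j_\iota(b^j_\iota)^*\|\le 1+\varepsilon$ is bounded independently of $m$. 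With this uniform bound in hand, all the error terms are $O(\varepsilon)$ with constants depending only on $|G|$, and the argument closes. A secondary, purely notational subtlety is that the proof formally uses (\ref{eqn-J-alpha}), established only later in Proposition~\ref{prop-J}; since that proposition does not depend on the present theorem, this is a legitimate forward reference and can be noted as such.
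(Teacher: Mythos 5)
Your proof is correct, and its substantive part, (v)$\Rightarrow$(i), is the same kind of estimate as the paper's, but the organization and one key step differ enough to be worth recording. The paper does not run a single cycle: it proves (i)$\Leftrightarrow$(ii) via Theorem~\ref{thm-hopf}, obtains (iii) from (i) exactly as you do (from $\varphi(1)=1$ and (\ref{eqn-quasibasis}), with (iii)$\Rightarrow$(ii) trivial), but derives (iv) from (i) rather than from (iii): it invokes the span form of (\ref{eqn-J-alpha}) to write $1=\sum_g\big(\sum_j b_j\alpha_g(b_j^*)\big)\lambda_g$ exactly and then sets $b^j_g=\alpha_g(b_j)$. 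Your route (iii)$\Rightarrow$(iv), taking $b^j_g=\frac{1}{\sqrt{|G|}}\,\alpha_g(u_j)$ for a quasi-basis satisfying (\ref{eqn-qb-condition}) (the rescaling you inserted mid-argument is exactly the right fix, since $\sum_j u_ju_j^*=Index(E)=|G|$), is a genuine alternative and arguably cleaner: it never needs the identity to lie in the \emph{algebraic} span of the elements $\sum_g x\alpha_g(x^*)\lambda_g$ (which is how the paper uses (\ref{eqn-J-alpha}), and which requires a further word beyond the closed-span statement of Proposition~\ref{prop-J}); for your (v)$\Rightarrow$(i) you only need the trivial inclusion $MeM\subseteq\mathcal J_\alpha$ and the closedness of $\mathcal J_\alpha$. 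Your (v)$\Rightarrow$(i) is also a mild streamlining of the paper's: you approximate $1$ using only the row $\{b^j_\iota\}_j$, whereas the paper sums over all $h\in G$ and compares with $|G|\cdot 1$; both arguments turn on the two points you isolate, namely the bound $\sup_j\|b^j_\iota\|^2\le 1+\varepsilon$ extracted from (v)(b) so that the error constants depend only on $|G|$ and not on $m$, and coefficientwise estimates of the $\lambda_g$-components.
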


\begin{proof} (i) $\Longleftrightarrow$ (ii) follows from
Theorem~\ref{thm-hopf}.

\noindent (i) $\Longrightarrow$ (iii). If $\{(u_i, u_i^*)\}$ is a
quasi-basis for $E$, we have from (\ref{eqn-quasibasis}) that
$\sum_i u_i\alpha_g(u_i^*)=0$ for $g\neq \iota$ since
$\varphi(1)=1$.

 (iii) $\Longrightarrow$ (ii). Obvious.

 (i) $\Longrightarrow$ (iv). Suppose
$\mathcal J_\alpha=M\times_\alpha G$. By (\ref{eqn-J-alpha}) there
exist $m\in \mathbb N$ and $b_j\in M$, $1\leq j\leq m$, such that
$$ \sum_g \big(\sum_j
b_j\alpha_g(b_j^*)\big)\lambda_g = 1.$$
 Thus
 \begin{eqnarray}\label{eqn-b}  \sum_j b_j b_j^* =1
 \ \text{ and }  \sum_j b_j\alpha_g(b_j^*) =0\ \text{ for }g\neq
 \iota.\end{eqnarray}
   Set $b^j_g:=\alpha_g(b_j).$ Then
\begin{align*}    \alpha_g (b^j_h)&  = \alpha_{g}
(\alpha_{h}(b_j))=\alpha_{gh}(b_j)= b^j_{gh},\\
    \sum_j b^j_g (b^j_h)^*
  & = \sum_j \alpha_g(b_j)\alpha_h(b_j^*) \\
  & = \alpha_g\big(\sum_j b_j \alpha_{g^{-1}h}(b_j^*)\big)  \\
  & =\delta_{gh}\  \text{by } (\ref{eqn-b}).
\end{align*}

 (iv) $\Longrightarrow$ (v). Obvious.

 (v) $\Longrightarrow$ (i). Let $\varepsilon>0$ and let
  $\{ b^j_g\in M\mid g\in G,\ 1\leq j\leq m\}$ satisfy
   (a) and (b) of (v).
  Note that (b) implies   $\|b_g^j\|< 1+\varepsilon$ for $g\in G,\
1\leq j\leq  m$. Indeed from $\big\|\, \|\sum_j b^j_g (b^j_g)^*\|
-1\big\|\leq \|\sum_j b^j_g (b^j_g)^* -1\|<\varepsilon,$ we have
 $\|b_g^j\|^2\leq \|\sum_j b_g^j(b_g^j)^*\|<1+\varepsilon$.
 Then
   \begin{align*}
   &\ \|\sum_{h,j} (\sum_g b^j_h\alpha_g((b^j_h)^*) \lambda_g)-|G|\,\| \\
   = &\ \|\sum_g(\sum_{h,j} b^j_h\alpha_g((b^j_h)^*)\lambda_g-|G|\,\| \\
   \leq  &\ \|\sum_{h,j} b^j_h (b^j_h)^* -|G|\,\|+\|\sum_{g\neq \iota} (\sum_{h,j}
               b^j_h\alpha_g((b^j_h)^*)\lambda_g)\|\\
   \leq & \ \sum_h\|\sum_{j} b^j_h (b^j_h)^* -1\,\|+\sum_{g\neq \iota} \sum_{h}
            \| \sum_{j}b^j_h\alpha_g((b^j_h)^*)\|\\
    < &\ \varepsilon |G| +\sum_{g\neq \iota}\sum_{h}
    \|\sum_{j}b^j_h\big(\alpha_g((b^j_h)^*)-(b^j_{gh})^*\big)\|+
    \sum_{g\neq \iota}\sum_{h}\|\sum_{j}b^j_h(b^j_{gh})^*\|\\
  < & \ \varepsilon \Big(|G|+  |G|^2\max_{g,j}\|b^j_g\| + |G|^2 \Big)\\
  < & \ \varepsilon \Big(|G|+  |G|^2(1+\varepsilon)  + |G|^2 \Big).
     \end{align*}
 Since $\sum_{h,j} (\sum_g b^j_h\alpha_g((b^j_h)^*) \lambda_g)\in
 \mathcal J_\alpha$ and $\varepsilon$ can be chosen to be arbitrarily small,
 we conclude that $\mathcal J_\alpha=M\times_\alpha G$.
\end{proof}

\vskip 1pc

\begin{ex}\label{exam-nonsat}
Let $w= \left(%
\begin{array}{cc}
  z_1 & 0 \\
  0 & z_2 \\
\end{array}%
\right)$  be a unitary with $w^n=1$ and define an automorphism
$\alpha$ on $M_2(\mathbb C))$  by $\alpha(a)=waw^*$, $a\in
M_2(\mathbb C)$. We will show that $\alpha$ is saturated if and
only if $z_2=-z_1$.
  For this, recall from (\ref{eqn-J-alpha}) that  $\alpha$ is
  saturated if and only if   there exist $x_j\in
M_2(\mathbb C)$, $1\leq i\leq m$, satisfying
\begin{eqnarray}\label{ex-matrix-0} \sum_{k=0}^{n-1}\sum_{j=1}^m
x_j\alpha^k(x_j^*)\lambda_k =1_{M_2(\mathbb C)}.\end{eqnarray}
 Hence, particularly for $k=0,1$, we have
 $$  \sum_j  x_j x_j^*= \left(%
\begin{array}{cc}
  1 & 0 \\
  0 & 1 \\
\end{array}%
\right)\ \text{ and }\  \sum_j x_j\alpha (x_j^*) = \sum_j x_jw
x_j^* w^* =\left(%
\begin{array}{cc}
  0 & 0 \\
  0 & 0 \\
\end{array}%
\right).$$
With  $x_j= \left(%
\begin{array}{cc}
  a_j & b_j \\
  c_j & d_j \\
\end{array}%
\right)$  and $z_i= e^{i\theta_i}$, $i=1,2,$ this means
$$  \sum_j \left(%
\begin{array}{cc}
  |a_j|^2+|b_j|^2 & a_j \bar{c_j}+b_j\bar{d_j} \\
  c_j \bar{a_j}+d_j\bar{b_j} & |c_j|^2+|d_j|^2 \\
\end{array}%
\right)= \left(%
\begin{array}{cc}
  1 & 0 \\
  0 & 1 \\
\end{array}%
\right),$$
\begin{eqnarray}\label{eqn-b-notzero} \sum_j \left(%
\begin{array}{cc}
   |a_j|^2+e^{i(\theta_2-\theta_1)}|b_j|^2 &
   e^{i(\theta_1-\theta_2)}a_j \bar{c_j}+b_j\bar{d_j} \\
   |a_j|^2+e^{i(\theta_2-\theta_1)}d_j\bar{b_j} &
   e^{i(\theta_1-\theta_2)}a_j \bar{c_j}+|d_j|^2 \\
\end{array}%
\right)  =\left(%
\begin{array}{cc}
  0 & 0 \\
  0 & 0 \\
\end{array}%
\right).
\end{eqnarray}
 Therefore, by comparing (1,1) entries of each matrices,
 it follows that if $\alpha$ is
saturated, then   there exist positive real numbers $a\,(=\sum_j
|a_j|^2)> 0$, $b\,(=\sum_j |b_j|^2)> 0$ such that
\begin{eqnarray}\label{ex-matrix-2} a+b =1   \ \text{ and }
 a+e^{i(\theta_2-\theta_1)}
b  =0.\end{eqnarray}
 Note that $b\neq 0$ since $b=0$ implies $a=0$ from
 $\sum_j \big(|a_j|^2+e^{i(\theta_2-\theta_1)}d_j\bar{b_j}\big) =0$
  in (\ref{eqn-b-notzero}).
  There are three possible cases for $\theta_1$,
 $\theta_2$ as follows.
 \begin{enumerate} \item[(i)]
 If $\theta_2-\theta_1\equiv 0(\text{mod }
2\pi)$, that is, $\alpha$ is trivial, then  (\ref{ex-matrix-2}) is
not possible.

 \item[(ii)] If $\theta_2-\theta_1\equiv  \pi (\text{mod }
2\pi)$, then $$x_1=\frac{1}{\sqrt{2}}\left(%
\begin{array}{cc}
  1 & 1 \\
  0 & 0 \\
\end{array}%
\right) \ \text{ and }\ x_2=\frac{1}{\sqrt{2}}\left(%
\begin{array}{cc}
  0 & 0 \\
  1  & 1  \\
\end{array}%
\right)$$ satisfy (\ref{ex-matrix-0}) with $m=2$. Thus   $\alpha$
is saturated.

 \item[(iii)] If $\theta_2-\theta_1\neq 0,\,\pi (\text{mod }
2\pi)$, then (\ref{ex-matrix-2}) is not possible for any
   $a$, $b>0$. Hence
$\alpha$ is not saturated.

\end{enumerate}

\end{ex}

\vskip 1pc
\begin{remark}
 Let $\alpha$, $\beta\in Aut(M)$ satisfy
$\alpha^n=\beta^n=id_M$ for some $n\geq 1$. If there is a unitary
$u\in M$ such that  $\beta=Ad(u)\circ\, \alpha$, then $\alpha$ and
$\beta$ are said to be \emph{exterior equivalent}, and if this is
the case the crossed
 products are isomorphic, $M\times_\alpha G\cong M\times_\beta G$,
 \cite[p.45]{Ph1}.
   Example~\ref{exam-nonsat} says that
  the property of being saturated may not be preserved
  under exterior equivalence.
  Also the case (iii) of Example~\ref{exam-nonsat} above with
$w=diag(\lambda,\bar{\lambda})$, $\lambda=e^{\frac{2\pi}{3}}$
(hence $\theta_1-\theta_2=\frac{2\pi}{3}-\frac{4\pi}{3} \neq
\pi(\text{mod } 2\pi))$, shows that $Index(E)<|G|$ is possible
even when $E$ is of index-finite type. In fact,
 if
$u_1= \left(%
\begin{array}{cc}
  1 & 0 \\
  0 & 1 \\
\end{array}%
\right)$ and $u_2= \left(%
\begin{array}{cc}
  0 & 1 \\
  1 & 0 \\
\end{array}%
\right)$, then  $\{(u_i,u_i^*)\}_{i=1}^2$ forms a quasi-basis for
$E$, but $Index(E)=2<|\mathbb Z_3|$.

\end{remark}

\vskip 1pc
\begin{remark}
  Recall that the Rokhlin property and the tracial Rokhlin
  property (weaker than the Rokhlin property)
 are defined as follows and considered intensively in \cite{Iz} and \cite{OP},
 respectively:
 \begin{enumerate}
 \item[(a)](\cite{Iz}) $\alpha$ is said to have the \emph{Rokhlin property}  if
 for every finite set $F\subset M$, every $\varepsilon>0$,  there are mutually
orthogonal projections $\{e_g \mid g\in G\}$ in $M$  such that
\begin{enumerate}
\item[(i)] $\|\alpha_g(e_h)-e_{gh}\|<\varepsilon$ for $g,h\in G.$
\item[(ii)] $\|e_gx-xe_g\|<\varepsilon $ for $g\in G$ and all
$x\in F$. \item[(iii)] $\sum_{g\in G}  e_g=1$.
\end{enumerate}

\item[(b)](\cite{OP}) $\alpha$ is said to have the  \emph{tracial
Rokhlin property}  if for every finite set $F \subset M$, every
$\varepsilon > 0$, every $n \in \mathbb{N}$, and every nonzero
positive element $x \in M$, there are mutually orthogonal
projections $\{e_g \mid g\in G\}$ in $M$ such that:
\begin{enumerate}
 \item[(i)] $\| \alpha_g (e_h) - e_{gh} \| < \varepsilon$ for $g,h
\in G$.
 \item[(ii)] $\| e_g x - x e_g \| < \varepsilon$ for $g \in
G$ and all $x \in F$.
 \item[(iii)]  With $e = \sum_{g \in G} e_g$,
the projection $1 - e$ is Murray-von Neumann equivalent to a
projection in the hereditary subalgebra of $M$ generated by $x$.
\end{enumerate}

\end{enumerate}

\end{remark}

\vskip 1pc

\noindent The following proposition is actually   observed  in
\cite[Lemma 1.13]{OP}, and we put a proof for reader's
convenience.

\vskip 1pc

\begin{prop}
  Let $M$ be a unital $C^*$-algebra and $\alpha$ be an action
of a discrete group $G$ on $M$. Suppose that for every
$\varepsilon>0$ and every finite subset $F\subset M$, there exist
a family of projections $\{e_g\}_{g\in G}$ such that
\begin{enumerate} \item $\|\alpha_g(e_h)-e_{gh}\|<\varepsilon.$
\item $\|e_gx-xe_g\|<\varepsilon$  for each $x\in F$.
\end{enumerate}  Then  $\alpha$ is an outer action.
\end{prop}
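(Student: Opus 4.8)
The plan is to argue by contradiction, using the approximate Rokhlin-type projections to rule out the existence of a nontrivial $g\in G$ for which $\alpha_g$ is inner. So suppose $\alpha_g=\mathrm{Ad}(u)$ for some $g\neq\iota$ and some unitary $u\in M$; we must derive a contradiction with the hypothesis. First I would fix the finite subset $F=\{u\}$ and a small $\varepsilon>0$ to be specified, and obtain projections $\{e_h\}_{h\in G}$ satisfying (1) and (2). The key observation is that since $g$ has some finite order $n\geq 2$ in its cyclic subgroup (or at least $g\neq\iota$), iterating $\alpha_g$ around the coset structure forces the projections $e_h$ to be permuted: from (1) we get $\|\alpha_g(e_h)-e_{gh}\|<\varepsilon$, so $\alpha_g^k(e_h)\approx e_{g^kh}$, and in particular, looking at a single orbit $\{h, gh, g^2h,\dots\}$ of the left multiplication action of $\langle g\rangle$ on $G$, the $e$'s along that orbit are approximately cyclically permuted by $\alpha_g$ and are approximately mutually orthogonal.

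The heart of the argument is then to exploit innerness. Since $\alpha_g=\mathrm{Ad}(u)$, condition (2) with $F=\{u\}$ gives $\|e_h u - u e_h\|<\varepsilon$, hence $\|\alpha_g(e_h)-e_h\|=\|ue_hu^*-e_h\|<2\varepsilon$. Combining with (1), $\|e_{gh}-e_h\|<3\varepsilon$ for every $h\in G$. Now pick any $h$ with $gh\neq h$ (possible because $g\neq\iota$). On one hand $e_{gh}$ and $e_h$ are within $3\varepsilon$ of each other; on the other hand they are distinct members of a (approximately) mutually orthogonal family of projections, so if $\varepsilon$ is small they must actually be \emph{orthogonal} or at least nearly so — but two orthogonal nonzero projections have distance $1$ from one another. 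To make this precise one needs the projections $e_h$ to be nonzero; this follows because $\sum_{h\in G}e_h$ is (approximately, or in the Rokhlin case exactly) the unit, so not all of them can vanish, and then by applying powers of $\alpha_g$ (which are automorphisms and hence isometric) and using (1) repeatedly, one sees that within each $\langle g\rangle$-orbit the projections have comparable ``size'', so the one we selected is nonzero once $\varepsilon$ is small enough. Choosing $\varepsilon$ so that $3\varepsilon<1-\delta$ for a suitable margin (say $\varepsilon=1/10$) yields the contradiction.

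The step I expect to be the main obstacle is the bookkeeping needed to guarantee that the particular projection $e_h$ I single out is genuinely nonzero: the hypothesis only asserts properties (1) and (2), not the normalization $\sum e_h=1$ appearing in the Rokhlin property, so a priori all the $e_h$ could be zero, making the statement vacuous. Resolving this requires a small extra argument. One clean way: work instead with the original (non-approximate) Rokhlin property — which is what \cite[Lemma 1.13]{OP} actually uses — where $\sum_{h}e_h=1$ forces some $e_{h_0}\neq 0$, and then the approximate invariance (1) together with the pigeonhole/orbit argument propagates nonvanishing around the orbit of $h_0$. Since the proposition as stated in the excerpt only lists (1) and (2), I would either (a) note that in the intended applications the full Rokhlin hypothesis is present and invoke the normalization, or (b) observe that without loss of generality one may assume some $e_h\neq 0$ (else replace the family by one coming from a genuine Rokhlin system, or simply note the conclusion is trivially about outerness which is witnessed on a unital algebra). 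Modulo that point, the rest is the routine $\varepsilon$-estimate sketched above: innerness collapses the $e_h$'s together along an orbit, while approximate orthogonality keeps distinct ones apart, and for small $\varepsilon$ these are incompatible.
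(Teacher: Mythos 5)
Your argument is essentially the paper's own proof: assume $\alpha_g=\mathrm{Ad}(u)$ for some $g\neq\iota$, feed $F=\{u\}$ into the hypothesis, and combine $\|\alpha_g(e_\iota)-e_g\|<\varepsilon$ with $\|ue_\iota u^*-e_\iota\|<\varepsilon$ (from approximate commutation) to force $\|e_g-e_\iota\|<3\varepsilon$, which contradicts the fact that two mutually orthogonal projections, at least one nonzero, are at distance $1$. The caveat you flag is genuine but applies equally to the paper: the hypotheses as stated omit both mutual orthogonality and any nonvanishing/normalization (so literally all $e_h=0$ would satisfy them), and the paper's proof silently imports ``mutually orthogonal'' and the nonvanishing of $e_\iota$ or $e_g$ from the Rokhlin setting of \cite[Lemma 1.13]{OP}; your proposed repair --- use $\sum_h e_h=1$ to pick $e_{h_0}\neq 0$ and run the same estimate on the pair $(e_{h_0},e_{gh_0})$ --- is exactly what is needed to make the argument airtight.
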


\begin{proof}
 Suppose there is a unitary $u\in
M$ such that $\alpha_g(x)= uxu^*$ for every $x\in M$ ($g\neq
\iota$). Put $F=\{u\}$ and $0<\varepsilon< 1/4$. Then there exist
mutually orthogonal projections $\{e_g\}_{g\in G}$ such that
$\|\alpha_g(e_h)-e_{gh}\|<\varepsilon< 1/4$. Thus $\|e_g u-u
e_g\|<\varepsilon< 1/4$. Then $\|\alpha_g(e_\iota)-ue_\iota
u^*\|=0$. But
\begin{align*}\|\alpha_g(e_\iota)-ue_\iota u^*\|&=
\|\alpha_g(e_\iota)-e_g+e_g-e_\iota+e_\iota-ue_\iota u^*\|\\
& \geq \|e_g-e_\iota\|-\|\alpha_g(e_\iota)-e_g\|-\|e_\iota-ue_\iota 1u^*\|\\
& \geq 1-\frac{1}{4}-\frac{1}{4}=\frac{1}{2}, \end{align*}
 which is a contradiction.
\end{proof}

\vskip 1pc

\vskip 1pc
\begin{remark}\label{remark} If $M\times_\alpha G$ is simple,
$\alpha$ is obviously saturated, and this is the case  if $G$ is a
finite group, $M$ is $\alpha$-simple, and $\tilde{\mathbb T}
(\alpha_g)\neq \{1\}$ for all $g\neq \iota$ (\cite[Theorem
3.1]{Ki}). In particular, $\alpha$ is saturated if $M$ is simple
and $\alpha$ is outer.

  But for a nonsimple $M$, this may not hold.
  In fact, if $\alpha$ is  an outer action of $\mathbb Z_n$ on $M$ and $u$
  is a unitary in $M$ with $u^n=1$
 such that the action $Ad(u)$ on $M$ is not saturated (as in
 Example~\ref{exam-nonsat}), then the action $\alpha\oplus Ad(u)$
 on $M\oplus M$  is outer but not saturated.
 \end{remark}

 \vskip 1pc \noindent  Now we show that
 if $\alpha$ satisfies the Rokhlin property (or satisfies
 the tracial Rokhlin property and
$M$ has cancellation) then $\alpha$ is saturated. For this we
first review the cancellation property of $C^*$-algebras. For
projections $p$, $q$ in a $C^*$-algebra, we write $p\perp q$ if
$pq=0$, and $p\sim q$ if they are Murrey-von Neumann equivalent.

\vskip 1pc
\begin{dfn} A unital $C^*$-algebra
$M$ has the \emph{cancellation} if, whenever $p$, $q$, $r$ are
projections in $M_n(M)$ for some $n$, with $p\perp r$, $q\perp r$,
and $(p+r)\sim (q+r)$, then $p\sim q$.
\end{dfn}

\vskip 1pc
\begin{remark} \begin{enumerate}
\item If $M$ has the cancellation and $p$, $q$ are projections in
$M$ such that $(1-p)\sim (1-q)$, then $p\sim q$
(\cite[V.2.4.14]{Bl-2}).

\item  It is well known that every $C^*$-algebra with stable rank
one has the cancellation (\cite[V.3.1.24]{Bl-2}).
\end{enumerate}
\end{remark}

\vskip 1pc

\begin{prop} Let $\alpha$ be an action of a finite group $G$ on a
unital $C^*$-algebra $M$. Then $\alpha$ is saturated if one of the
following holds.
 \begin{enumerate}
 \item[(i)] $\alpha$ has the Rokhlin property.
 \item[(ii)] $\alpha$ has the tracial Rokhlin property and $M$
has the cancellation.
 \end{enumerate}
\end{prop}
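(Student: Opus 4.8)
The strategy is to verify, in each case, the approximate condition (v) of Theorem~\ref{thm-main-1}, which by that theorem is equivalent to $\alpha$ being saturated. Fix $\varepsilon>0$. In both cases we will produce a finite family $\{e_g\mid g\in G\}$ of mutually orthogonal projections satisfying $\|\alpha_g(e_h)-e_{gh}\|<\varepsilon$ for all $g,h\in G$, and then set $b^j_g := e_g$ with $j$ ranging over a singleton (so $m=1$, dropping the superscript). Condition (a) of (v) is then exactly $\sum_{h}\|\alpha_g(e_h)-e_{gh}\|<|G|\varepsilon$, which is small after rescaling $\varepsilon$. For condition (b), note $b_g b_h^* = e_g e_h = \delta_{gh}\,e_g$ since the $e_g$ are orthogonal projections, so $\sum_j b^j_g(b^j_h)^* - \delta_{gh} = \delta_{gh}(e_g - 1)$; thus condition (b) requires $\|e_g-1\|<\varepsilon$ for each fixed $g$, which is \emph{not} automatic. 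The correct bookkeeping is to observe that $\sum_g e_g$ being close to $1$ is what we actually have, so one should not index $b$ by a singleton but rather by $j\in G$ as well: put $b^h_g := e_{hg}$? That still does not diagonalize correctly. The clean choice is to take $m=|G|$ indexed by $k\in G$ and set $b^k_g := \alpha_g(e_{k})$-type elements; more transparently, apply (iv) in case (i) and (v) in case (ii).

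For case (i) (Rokhlin property): by definition, for the given $\varepsilon$ (and, say, $F=\{1\}$) there are mutually orthogonal projections $\{e_g\}_{g\in G}$ with $\|\alpha_g(e_h)-e_{gh}\|<\varepsilon$ and $\sum_{g}e_g=1$. I claim one can in fact pass to an \emph{exact} Rokhlin system, i.e.\ with $\alpha_g(e_h)=e_{gh}$; indeed a standard perturbation argument (the projections $\alpha_g(e_\iota)$ are, for $\varepsilon$ small, close to a genuine partition of unity permuted exactly by $\alpha$, obtained by functional calculus on $\sum_g \lambda_g$-type averages) upgrades the approximate tower to an exact one. Granting this, set $b^h_g := e_{g}\,\delta$—no: the honest route is to invoke (iv) directly. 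Define $b_j$ indexed by $j$ running over an orthonormal-type decomposition: put $m=1$ is wrong, so instead note that with an exact Rokhlin partition $\{e_g\}$ we may define $b^{\,k}_g := e_{g}$ for the single relevant $k$ only if $e_g$ has "full rank", which fails. The resolution already present in the literature (and this is the main technical point) is: write $1=\sum_g e_g$, use that $\alpha_g(e_h)=e_{gh}$, and set $b^{\,k}_g:=\alpha_g(v_k)$ where $\{v_k\}_{k=1}^{m}\subset e_\iota M$ is chosen with $\sum_k v_k v_k^*=e_\iota$ (possible since $e_\iota M e_\iota$ is unital with unit $e_\iota$; take $m=1$, $v_1=e_\iota$). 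Then $b^k_g=\alpha_g(e_\iota)=e_g$ and $\sum_k b^k_g(b^k_h)^* = e_g e_h=\delta_{gh}e_g$, and summing over the implicit index $k$ does not give $1$. The actual fix: use $m=|G|$, index by $k\in G$, and put $b^k_g := \alpha_g(w_k)$ where $w_k\in e_\iota M e_k$?—this requires partial isometries linking the $e_k$, which need not exist. Hence I expect the genuinely correct statement to require only the \emph{orthogonal projections summing to $1$ and permuted by $\alpha$}, and then the family $b^k_g$ with $k\in G$ defined by $b^k_g := e_g$ if $k=\iota$ and $0$ otherwise does satisfy (a) but $\sum_j b^j_g(b^j_h)^* = e_g e_h = \delta_{gh}e_g\neq\delta_{gh}1$ unless $|G|=1$. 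Therefore the right device is: take $b^k_g$ for $k\in G$ equal to $\alpha_g(e_{k^{-1}})$? Then $b^k_g(b^k_h)^* = \alpha_g(e_{k^{-1}})\alpha_h(e_{k^{-1}})^* = \alpha_g(e_{k^{-1}})\alpha_h(e_{k^{-1}})$; with the exact permutation this is $e_{gk^{-1}}e_{hk^{-1}}=\delta_{gh}e_{gk^{-1}}$, and $\sum_{k\in G} b^k_g(b^k_h)^* = \delta_{gh}\sum_{k} e_{gk^{-1}} = \delta_{gh}\sum_{\ell}e_\ell = \delta_{gh}\,1$. And $\alpha_g(b^k_h)=\alpha_g\alpha_h(e_{k^{-1}})=\alpha_{gh}(e_{k^{-1}})=b^k_{gh}$. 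So $\{b^k_g := \alpha_g(e_{k^{-1}})\mid g,k\in G\}$, $m=|G|$, satisfies (iv)(a),(b) \emph{exactly} once we have an exact Rokhlin partition, and with the approximate Rokhlin partition it satisfies (v)(a),(b) after the routine triangle-inequality estimate (using $\|\alpha_g(e_h)-e_{gh}\|<\varepsilon'$ with $\varepsilon'$ a suitable multiple of $\varepsilon/|G|$). This handles (i).

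For case (ii) (tracial Rokhlin property plus cancellation): apply the definition with $F=\{1\}$, the given $\varepsilon$, $n=1$, and any fixed nonzero positive $x$ to obtain mutually orthogonal projections $\{e_g\}$ with $\|\alpha_g(e_h)-e_{gh}\|<\varepsilon$ and, setting $e=\sum_g e_g$, the projection $1-e$ equivalent to a projection in $\overline{xMx}$—but we want $e$ close to $1$, not $1-e$ small in norm. The point of cancellation enters differently: here we should choose $x$ judiciously. Actually the correct move is: the tracial Rokhlin property with varying $x$ lets us make $1-e$ equivalent to a projection below an arbitrarily "small" positive element; combined with cancellation one deduces... hmm. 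The cleaner path: use the $b^k_g := \alpha_g(e_{k^{-1}})$ construction as before; then $\sum_{k}b^k_g(b^k_h)^*$ is approximately $\delta_{gh}\sum_\ell e_\ell = \delta_{gh}e$, so (v)(b) would need $\|e-1\|<\varepsilon$, which tracial Rokhlin does \emph{not} give. So instead one must directly show $\mathcal J_\alpha = M\times_\alpha G$ using (\ref{eqn-J-alpha}): the element $\sum_g(\sum_{h,k}b^k_h\alpha_g((b^k_h)^*))\lambda_g$ lies in $\mathcal J_\alpha$ and equals approximately $e\cdot 1_{M\times_\alpha G}$ (i.e.\ $e$, viewed in $M$, times the identity), so $\mathcal J_\alpha$ contains an element close to the projection $e$; since $\mathcal J_\alpha$ is closed it contains $e$ itself, hence $1-e\notin$—no. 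Rather: $\mathcal J_\alpha$ is an ideal containing (approximately, hence exactly, by closedness) the projection $e$. Thus $\mathcal J_\alpha \supseteq \overline{e(M\times_\alpha G)e}$'s ideal, i.e.\ contains everything Murray–von Neumann–below $e$. Now $1-e \sim p$ for $p\in\overline{xMx}$; choosing $x$ inside $\mathcal J_\alpha$ at the outset—e.g.\ $x = F(e)=$ the image—one gets $p\in\mathcal J_\alpha$, so $1-e\sim p$ with $p\in\mathcal J_\alpha$, hence $1-e\in\mathcal J_\alpha$ (ideals are closed under equivalence of projections), hence $1=e+(1-e)\in\mathcal J_\alpha$, so $\mathcal J_\alpha=M\times_\alpha G$ and $\alpha$ is saturated.

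\textbf{Main obstacle.} The delicate point is case (ii): making the tracial-Rokhlin "remainder" $1-e$ actually land inside $\mathcal J_\alpha$, for which one must (a) run the tracial Rokhlin definition with the positive element $x$ chosen to generate a hereditary subalgebra sitting inside $\mathcal J_\alpha$, and (b) invoke cancellation (via the stated remark $(1-p)\sim(1-q)\Rightarrow p\sim q$, or the closure of ideals under $\sim$) to move from "$1-e$ is equivalent to something in $\mathcal J_\alpha$" to "$1-e\in\mathcal J_\alpha$". In case (i) the only subtlety is upgrading the approximate Rokhlin tower to an exact one (or, alternatively, just plugging the approximate tower into condition (v) with the family $b^k_g=\alpha_g(e_{k^{-1}})$ and grinding the $\varepsilon$-estimate), which is routine.
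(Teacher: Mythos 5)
Your case (i) is essentially correct: the family $b^k_g=\alpha_g(e_{k^{-1}})$, $k\in G$ (so $m=|G|$), satisfies condition (v)(a) of Theorem~\ref{thm-main-1} exactly and (v)(b) up to $2|G|\varepsilon$, which is all that is needed; this matches the device used in the proof of (i)$\Rightarrow$(iv) of that theorem. The aside about upgrading an approximate Rokhlin tower to an exact one is both unjustified and unnecessary, so it should simply be deleted in favour of the $\varepsilon$-estimate you already indicate.

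Case (ii), however, has a genuine gap at exactly the point you flag as delicate. Your argument needs the tracial Rokhlin comparison projection $p$ to lie in $\mathcal J_\alpha$, and for that you propose to ``choose $x$ inside $\mathcal J_\alpha$ at the outset, e.g.\ $x=F(e)$.'' This is circular: the nonzero positive element $x\in M$ must be specified \emph{before} the tracial Rokhlin property is invoked, whereas $e=\sum_g e_g$ exists only afterwards (and $F(e)$ is either $e$ itself or a scalar), and at the outset you have no nonzero positive element of $M$ known to lie in $\mathcal J_\alpha$; indeed $\mathcal J_\alpha\cap M$ can be $\{0\}$ for a non-saturated action (e.g.\ the trivial action on $\mathbb C$), so ``pick $x\in\mathcal J_\alpha\cap M$'' is not an available move. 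The missing idea is a bootstrap with \emph{two} applications of the tracial Rokhlin property: first apply it with an arbitrary nonzero positive $x$; your element $\sum_{h,k}\sum_g b^k_h\alpha_g((b^k_h)^*)\lambda_g$ lies in $\mathcal J_\alpha$ by (\ref{eqn-J-alpha}) and is within a small distance of $|G|e$, whence $e\in\mathcal J_\alpha$ --- note that this step needs more than ``$\mathcal J_\alpha$ is closed'': one passes to the quotient $(M\times_\alpha G)/\mathcal J_\alpha$, where the image of $e$ is a projection of norm $<1$, hence zero. Then (assuming $e\neq 0$, a degenerate possibility that the stated definition does not exclude and that the paper's proof also passes over) apply the tracial Rokhlin property a second time with $x:=e$; the new remainder $1-f$ is equivalent to a projection $p\le e$, so $p\in\mathcal J_\alpha$, hence $1-f\in\mathcal J_\alpha$, while $f\in\mathcal J_\alpha$ by the same tower computation, giving $1\in\mathcal J_\alpha$. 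With this repair your route differs genuinely from the paper's, which uses cancellation to convert $1-f\sim (e)'\le e$ into $f\sim 1-(e)'$, chooses a partial isometry $v$ implementing this equivalence, and writes down an explicit element of $\mathcal J_\alpha$ within $\varepsilon$ of $1$; the repaired bootstrap uses only that closed ideals are hereditary and absorb Murray--von Neumann equivalent projections, and the fact that cancellation plays no real role anywhere in your write-up should itself have been a warning that a step was being assumed rather than proved.
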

\begin{proof} (i)  For an $\varepsilon>0$, there exist mutually orthogonal
projections $\{e_g\}_g$ such that $\sum_g e_g=1$ and
$\|\alpha_g(e_h)-e_{gh}\|<\varepsilon.$ Then, with $m=1$, the
elements  $b^1_g:=e_g$ satisfy (v) of Theorem~\ref{thm-main-1}.

 (ii) Now suppose $\alpha$ has the tracial Rokhlin property and
 $M$ has the cancellation.
 We shall show that $\mathcal J_\alpha$ contains
 the unit of $M\times_\alpha G$.
 Let $0<\varepsilon<1$. For each $g\in G$, choose mutually orthogonal
 projections $\{e_h^g\}_{h\in G}$  such that
$$\|\alpha_k(e^g_h)-e^g_{kh}\|<\frac{\varepsilon}{2|G|^2},$$ and
put $e^g:=\sum_{h\in G} e^g_h$.  If $e^g=1$, for some $g$, then
$b^1_h:=e^g_h$ ($h\in G$) will satisfy (v) of
Theorem~\ref{thm-main-1} as in (i). If $e^g\neq 1$ for every $g\in
G$, then by the tracial  Rokhlin property of $\alpha$ there exist
mutually orthogonal projections $\{f^g_h\}_{h\in G}$ in $M$ such
that
$$\|\alpha_k(f^g_h)-f^g_{kh}\|<\frac{\varepsilon}{2|G|^2}$$  and
$$(1-\sum_{h\in G} f^g_h)\sim (e^g)'< e^g$$ for a subprojection
$(e^g)'$ of $e^g$. Put $f^g:=\sum_h f^g_h$. Then since $M$ has
cancellation, it follows that $f^g\sim
\big(1-(e^g)'\big)>(1-e^g)$. Let $v_g\in M$ be a partial isometry
satisfying
$$v_g^*v_g=f^g,\ \ v_gv_g^*=1-(e^g)', $$ and set
$$x_g:=\frac{1}{|G|} \sum_{k,h} \Big((e^k_h\alpha_g(e^k_h)
+(1-e^k)v_k f^k_h\alpha_g(f^k_h \alpha_{g^{-1}}(v_k^*)) \Big), \
g\in G.$$
 Now we show that the element $x:=\sum_g x_g\lambda_g\in \mathcal
 J_\alpha$ satisfies $\|x-1\|<\varepsilon$.
In fact,  for $g\neq \iota$,
\begin{align*}\|x_g\|&\leq \frac{1}{|G|} \sum_{k,h} \|(e^k_h\alpha_g(e^k_h)
+(1-e^k)v_k f^k_h\alpha_g(f^k_h \alpha_{g^{-1}}(v_k^*))\|\\
&\leq \frac{1}{|G|}
\sum_{k,h}\big(\|(e^k_h\alpha_g(e^k_h)\|+\|f^k_h\alpha_g(f^k_h)\|\big)\\
&\leq \frac{1}{|G|}
\sum_{k,h}\big(\|e^k_h(\alpha_g(e^k_h)-e^k_{gh})\|+\|e^k_h
e^k_{gh}\|+\|f^k_h(\alpha_g(f^k_h)-f^k_{gh})\|+\|f^k_h
f^k_{gh}\|\big)\\
&\leq \frac{1}{|G|} \sum_{k,h}(\frac{\varepsilon}{2|G|^2}
+\frac{\varepsilon}{2|G|^2})\\
&=\frac{\varepsilon}{|G|}\end{align*}
 and
\begin{align*}x_\iota&=\frac{1}{|G|}(\sum_k
e^k+\sum_k(1-e^k)v_kf^kv_k^*)\\
&=\frac{1}{|G|}\big(\sum_k
 e^k+\sum_k(1-e^k)(1-(e^k)')\big)\\
 &=1.\end{align*}
\end{proof}

\vskip 1pc
 For the rest of this section we consider a finite group action on a commutative
 $C^*$-algebra  $C(X)$.
 If $G$ acts on a compact Hausdorff space $X$,
 it induces an action, say $\alpha$, on $C(X)$ by
 $$\alpha_g(f)(x)=f(g^{-1}x), \ f\in C(X).$$
For each $x\in X$, let $G_x=\{ g\in G : gx=x \}$ be the isotropy
group of $x$ and for a subgroup $H$ of $G$ ($H<G$), put
$$
X_H = \{ x \in X : G_x=H\}.
$$
It is readily seen that $X_H$ and $X_{H'}$ are disjoint if $H \neq
H'$, and $X$ is partitioned as
$$
X = \bigcup_{H<G} X_H.
$$

\vskip 1pc
\begin{thm}\label{thm-comm}
Let $X$ be a compact Hausdorff space and $G$ a finite group acting
on $X$.   If $\alpha$ is the induced action of $G$ on $C(X)$, the
following are equivalent:
\begin{enumerate}
    \item[(i)] $E :C(X)\to C(X)^\alpha$ is of index finite type.
    \item[(ii)] $X_H$ is closed for each $H<G$.
\end{enumerate}
Moreover, if this is case the index of $E $ is $Index(E
)=\sum_{H<G} \frac{|G|}{|H|} \chi_{X_H},$ where $\chi_{X_H}$ is
the characteristic function on $X_H$.
\end{thm}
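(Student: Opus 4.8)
The plan is to transport both implications to a single explicit criterion: existence of finitely many $v_1,\dots,v_n\in C(X)$ that, on each $G$-orbit, look like a scaled orthonormal frame. Using $\alpha_g(f)(x)=f(g^{-1}x)$ and $E(f)(x)=\tfrac{1}{|G|}\sum_g f(g^{-1}x)$, one computes that for a point $x$ with orbit $Gx=\{x_1,\dots,x_k\}$, where $k=|Gx|=|G|/|G_x|$, and any $f\in C(X)$,
\[
\Big(\sum_i v_i\,E(v_i^*f)\Big)(x)=\frac{1}{k}\sum_{j=1}^{k}\Big(\sum_i v_i(x)\overline{v_i(x_j)}\Big)f(x_j),
\]
since $g\mapsto g^{-1}x$ hits each $x_j$ exactly $|G_x|$ times. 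As the restriction map $C(X)\to\mathbb C^{Gx}$ is onto (a finite subset of a compact Hausdorff space is interpolating), requiring the left side to equal $f(x)$ for every $f$ is equivalent to the Gram conditions $\sum_i|v_i(x)|^2=|Gx|$ and $\sum_i v_i(x)\overline{v_i(x')}=0$ for $x'\in Gx\setminus\{x\}$; the identity $b=\sum_i E(bv_i)v_i^*$ only repeats their complex conjugates. Hence $E$ is of index-finite type exactly when such a family exists, and then $Index(E)=\sum_i v_iv_i^*$ is the function $x\mapsto\sum_i|v_i(x)|^2=|Gx|=\sum_{H<G}\tfrac{|G|}{|H|}\chi_{X_H}$, so the asserted index formula comes for free from either equivalent condition.

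For (i)$\Rightarrow$(ii): if such $v_i$ exist then $x\mapsto|Gx|$ coincides with the continuous function $\sum_i|v_i|^2$, so every level set $\{x:|G_x|=m\}$ is clopen, and I would then observe that this is equivalent to "$X_H$ closed for all $H<G$". Indeed, if the level sets are clopen and $x_\lambda\to x$ with $G_{x_\lambda}=H$ (so $|H|=|G|/m$) and $x$ in the corresponding closed level set, then $hx=\lim hx_\lambda=\lim x_\lambda=x$ for each $h\in H$, giving $H\subseteq G_x$ and, by the cardinality match, $H=G_x$, so $x\in X_H$; conversely, if all $X_H$ are closed then, $G$ being finite, each level set is a finite union of the closed sets $X_H$ and has closed complement, hence is clopen.

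For (ii)$\Rightarrow$(i): here each $X_H$ is clopen. Around a point $x_0$ with $G_{x_0}=H$, the usual Hausdorff separation argument (separating $x_0$ from $gx_0$ for $g\notin H$), intersected with the open set $X_H$ and then replaced by $\bigcap_{h\in H}hV$, produces an $H$-invariant open $V\ni x_0$ with $V\subseteq X_H$ and $gV\cap V\neq\emptyset\Rightarrow g\in H$. Then $GV=\bigsqcup_{gH\in G/H}gV$ is open and $G$-invariant, each sheet $gV$ maps homeomorphically onto $\pi(V)$ under the quotient map $\pi\colon X\to X/G$, and every orbit contained in $GV$ meets each sheet in exactly one point. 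Covering the compact space $X/G$ by finitely many such $\pi(V_l)$ and choosing a subordinate partition of unity $\{\psi_l\}$, I would set $\phi_l=\psi_l\circ\pi\in C(X)^\alpha$ and, for each of the $r_l=|G|/|H_l|$ sheets $g_{l,a}V_l$ of $GV_l$, define $v_{l,a}=\sqrt{r_l\,\phi_l}\cdot\chi_{g_{l,a}V_l}$; this is continuous because $\phi_l$ vanishes near $\partial(GV_l)$ while $g_{l,a}V_l$ is clopen in $GV_l$. Since a point $x$ occupies at most one sheet of each $GV_l$ and $\phi_l(x)\neq0$ forces $r_l=|Gx|$, one gets $\sum_{l,a}|v_{l,a}(x)|^2=\sum_l r_l\phi_l(x)=|Gx|$, and for $x'\neq x$ in the same orbit the two points sit in different sheets of every $GV_l$ containing them, so all mixed sums $\sum_a v_{l,a}(x)\overline{v_{l,a}(x')}$ vanish. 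Thus $\{(v_{l,a},v_{l,a}^*)\}$ is a quasi-basis for $E$, confirming index-finiteness and, once more, the formula $Index(E)=\sum_{H<G}\tfrac{|G|}{|H|}\chi_{X_H}$.

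The step I expect to be the crux is this last construction: converting "$X_H$ clopen" into a genuine local product (sheeted) structure for $\pi$ and then distributing $\sqrt{\phi_l}$ coherently across the sheets. In particular the normalization factor $\sqrt{r_l}$, present precisely so that the index evaluates to the orbit size $|Gx|$ rather than to $1$, is the one detail that must not be overlooked; the equivalence "$X_H$ closed $\iff$ level sets clopen" and the interpolation remark are routine by comparison.
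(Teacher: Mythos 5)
Your proposal is correct, and its backbone is the same as the paper's: reduce everything to the pointwise Gram conditions $\sum_i v_i(x)\overline{v_i(x')}=|Gx|\,\delta_{x,x'}$ on each orbit (the paper's equation (\ref{eqn-indexvalue}) is exactly your diagonal condition), and build a quasi-basis from square roots of a partition of unity weighted by $\sqrt{|G|/|H|}$. The differences are in execution and are worth recording. For (i)$\Rightarrow$(ii) the paper evaluates the quasi-basis identity against test functions $f_x$ along a convergent sequence in $X_H$ and compares the resulting counts ($|H'|=|H|+|H'\setminus H|$, then $|H|=|H'|$); you instead observe that $x\mapsto |Gx|$ equals the continuous function $\sum_i|v_i|^2$, so its level sets are clopen, and combine this with the elementary net argument $H\subseteq G_x$ for limits of points with stabilizer $H$. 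This is cleaner, and it also quietly repairs a small blemish in the paper, which argues with sequences although closedness in a general compact Hausdorff space requires nets. For (ii)$\Rightarrow$(i) the paper covers each clopen $X_H$ directly by open sets meeting every orbit in at most one point of $X_H$ and scales the subordinate partition of unity by $\sqrt{|G|/|H|}$; you pass to the orbit space, build $H$-invariant tube neighborhoods $V\subseteq X_H$ with $gV\cap V\neq\emptyset\Rightarrow g\in H$, pull back a partition of unity from $X/G$, and distribute $\sqrt{r_l\phi_l}$ over the sheets. The two constructions are essentially equivalent (your sheet condition is the paper's covering condition in disguise), but yours makes the local triviality of $\pi:X\to X/G$ explicit, at the mild extra cost of checking that $X/G$ is compact Hausdorff and that each $v_{l,a}$ is continuous across sheet boundaries — points you do address correctly. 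Both routes yield the same index formula $\sum_{H<G}\frac{|G|}{|H|}\chi_{X_H}$.
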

\begin{proof}
(i) $\Longrightarrow$ (ii). If $E $ is of index-finite type and
$\{(u_i, u_i^*)\}_{i=1}^k$ is a quasi-basis for $E$, then
$$
\sum_i u_i E (u_i^* f)=f,
$$
that is,
\begin{equation} \label{eqn-f}
\frac{1}{|G|}\sum_i u_i(x) \Bigl(\sum_{g\in G} u_i^*(g^{-1}x)
f(g^{-1}x)\Bigr)=f(x),
\end{equation}
 for $f \in C(X)$ and $x\in X $.
For each $x\in X$, choose a continuous function $f_x \in C(X)$
satisfying $f_x|_{Gx\setminus \{x\}}\equiv 0$ and $f_x(x)=1$.
 Then (\ref{eqn-f}) with $f_x$ in place of $f$ gives
\begin{equation} \label{eqn-f-x}
\frac{1}{|G|}\sum_i u_i(x) \Bigl(\sum_{g\in G} u_i^*(g^{-1}x)
f_x(g^{-1}x)\Bigr)=f_x(x),
\end{equation}
and so we have
\begin{equation} \label{eqn-indexvalue}
\frac{|G_x|}{|G|}\sum_i u_i(x)  u_i^*(x) =1.
\end{equation}
  To show that each $X_H$ is closed, let $\{x_n \in X_H :n=1,2,
\ldots \}$ be a sequence of elements in $X_H$ with limit $x\in
X_{H'}$. Then (\ref{eqn-f}) gives
\begin{align*}
f_x(x_n) &= \frac{1}{|G|}\sum_i u_i(x_n) \Bigl(\sum_{g\in G}
u_i^*(g^{-1}x_n) f_x(g^{-1}x_n)\Bigr) \\
&= \frac{1}{|G|}\sum_i u_i(x_n) \Bigl(\sum_{g\in H}
u_i^*(g^{-1}x_n) f_x(g^{-1}x_n) + \sum_{g\not\in H}
u_i^*(g^{-1}x_n) f_x(g^{-1}x_n)\Bigr) \\
&= \frac{1}{|G|}\sum_i u_i(x_n) \Bigl(|H| u_i^*(x_n) f_x(x_n) +
\sum_{g\not\in H} u_i^*(g^{-1}x_n) f_x(g^{-1}x_n)\Bigr).
\end{align*}
Taking the limit as $n\to \infty$, we have
\begin{align*}
f_x(x)&= \frac{1}{|G|}\sum_i u_i(x) \Bigl(|H| u_i^*(x) f_x(x) +
\sum_{g\not\in H}
u_i^*(g^{-1}x) f_x(g^{-1}x)\Bigr) \\
&= \frac{1}{|G|}\sum_i u_i(x) \Bigl(|H| u_i^*(x) f_x(x) +
|H'\setminus H| u_i^*(x) f_x(x)\Bigr)\\
&= \frac{|H|+ |H'\setminus H|}{|G|}\sum_i u_i(x) u_i^*(x) f_x(x).
\end{align*}
Therefore, comparing with (\ref{eqn-indexvalue}), we obtain
$$
|H'|=|H|+ |H'\setminus H|
$$ since
$G_x=H'$ and $f_x(x)=1$. Hence  $$ H\subset H'.$$
 On the other hand, since $G_{x_n}=H$, again by
 (\ref{eqn-indexvalue}),
$ \frac{|H|}{|G|}\sum_i u_i(x_n)  u_i^*(x_n) =1 $ with the limit $
\frac{|H|}{|G|}\sum_i u_i(x)  u_i^*(x) =1$ as $n\to \infty$.
 But also $\frac{|H'|}{|G|}\sum_i u_i(x)  u_i^*(x) =1$ by (\ref{eqn-indexvalue}),
 and thus $|H|=|H'|$.
Consequently we have $$H=H'$$ because  $H\subset H'.$   This shows
that $X_H$ is closed.

(ii) $\Longrightarrow$ (i). Assume that $X_H$ is closed for every
subgroup  $H$ of $G$. Then $X_H$ is open since there are only
finitely many such subsets. Let $\mathcal{U}_H=\{U_{H,i_H} :
i_H=1,2,\ldots,n_H\}$ be an open covering of $X_H$ such that
$$x\in U_{H,i_H}\ \Longrightarrow\ g^{-1}x \not\in U_{H,i_H}
\text{ or }  g^{-1}x \not \in X_H \text{ whenever } g^{-1}x \neq
x.$$ Let $\{ v_{H,i_H} \}$ be a partition of unity subordinate to
$\mathcal{U}_H$. We understand that the domain of $v_{H,i_H}$ is
$X$ by assigning $0$ to $x \not\in X_H$. Let
$u_{H,i_H}=\sqrt{v_{H,i_H}}$.

We claim that
\begin{eqnarray}\label{eqn-comm-qb}
\bigg\{\big(\sqrt{\frac{|G|}{|H|}} \,u_{H,i_H},\,
\sqrt{\frac{|G|}{|H|}}\,u_{H,i_H}^*\big) : H<G, i_H=1,2,\ldots,n_H
\bigg\}
\end{eqnarray}
is a quasi-basis for $E$. For $f\in C(X)$ and $x\in X$, let $F<G$
and $1\leq j \leq n_F$ be such that $x\in X_F$ and $x \in
U_{F,j}$. Then
\begin{align*}
&\  \sum_{H<G}\sum_{i_H=1}^{n_H} \biggl(
 \sqrt{\frac{|G|}{|H|}}\,u_{H,i_H} E\big(\sqrt{\frac{|G|}{|H|}}
 \,u_{H,i_H}^* f \big) \biggr)(x) \\
=&\ \frac{1}{|G|}\sum_{H<G}\sum_{i_H=1}^{n_H} \biggl(
\sqrt{\frac{|G|}{|H|}}u_{H,i_H}(x) \Bigl(\sum_{g\in G}
\sqrt{\frac{|G|}{|H|}}u_{H,i_H}^*(g^{-1}x) f(g^{-1}x)\Bigr)\biggr) \\
=&\ \sum_{{i_F}=1}^{n_F}\frac{1}{|F|} u_{F,{i_F}}(x)
\Bigl(\sum_{g\in G}
u_{F,{i_F}}^*(g^{-1}x) f(g^{-1}x)\Bigr) \\
=&\ \frac{1}{|F|} \sum_{{i_F}=1}^{n_F} u_{F,{i_F}}(x)
\Bigl(\sum_{g\in F}
u_{F,{i_F}}^*(g^{-1}x) f(g^{-1}x)\Bigr) \\
=&\ \frac{1}{|F|} \sum_{{i_F}=1}^{n_F} u_{F,{i_F}}(x) | F |
u_{F,{i_F}}^*(x) f(x) \\
=&\  \sum_{{i_F}=1}^{n_F} v_{F,{i_F}}(x)
 f(x) \\
=&\ f(x),
\end{align*}
as claimed.

\end{proof}

\vskip 1pc
 Recall that an action $G$ on $X$ is \emph{free} if
 $gx\neq x$ for $g\in G$, $g\neq 1$, and $x\in X$
\vskip 1pc

\begin{cor}\label{cor-commutative}
 Let $X$ be a compact Hausdorff space and $G$ a finite group acting
on $X$. If $\alpha$ is the induced action on $C(X)$, the following
are equivalent.
 \begin{enumerate}

 \item[(i)] $G$ acts freely on $X$.

 \item[(ii)] $E : C(X)\to C(X)^\alpha$ is of
 index-finite type with $Index(E)=|G|$.

 \item[(iii)] $\alpha$ is saturated.

 \end{enumerate}
\end{cor}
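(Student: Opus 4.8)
The plan is to deduce everything from Theorem~\ref{thm-main-1} and Theorem~\ref{thm-comm}, so that no new computation is really needed. First, applying Theorem~\ref{thm-main-1} to the unital $C^*$-algebra $M=C(X)$ gives immediately the equivalence of (ii) and (iii): the action $\alpha$ is saturated precisely when $E\colon C(X)\to C(X)^\alpha$ is of index finite type with $Index(E)=|G|\cdot 1$. Hence it remains only to prove (i)$\Leftrightarrow$(ii), and for this Theorem~\ref{thm-comm} does the work, since it identifies index-finiteness of $E$ with closedness of every stratum $X_H$ and, in that case, supplies the explicit formula $Index(E)=\sum_{H<G}\frac{|G|}{|H|}\chi_{X_H}$.

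For (i)$\Rightarrow$(ii): if $G$ acts freely then $G_x=\{1\}$ for every $x\in X$, so the partition $X=\bigcup_{H<G}X_H$ degenerates to $X=X_{\{1\}}$ with every other stratum empty. In particular each $X_H$ is closed, so by the implication (ii)$\Rightarrow$(i) of Theorem~\ref{thm-comm} the expectation $E$ is of index finite type, and the index formula collapses to $Index(E)=\frac{|G|}{|\{1\}|}\chi_{X}=|G|\cdot 1$, which is (ii).

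For (ii)$\Rightarrow$(i): assume $E$ is of index finite type with $Index(E)=|G|\cdot 1$. By the implication (i)$\Rightarrow$(ii) of Theorem~\ref{thm-comm} every $X_H$ is closed and $Index(E)=\sum_{H<G}\frac{|G|}{|H|}\chi_{X_H}$. Since the $X_H$ are pairwise disjoint and cover $X$, any $x\in X$ lies in exactly one stratum, namely $X_{G_x}$, so evaluating the formula at $x$ gives $|G|=Index(E)(x)=\frac{|G|}{|G_x|}$, hence $|G_x|=1$. As $x$ was arbitrary, $G$ acts freely on $X$.

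The whole argument is bookkeeping once Theorems~\ref{thm-main-1} and \ref{thm-comm} are available; the only point that deserves a moment's care is the observation that, because the strata $X_H$ partition $X$, the value of $Index(E)$ at a point $x$ is exactly $|G|/|G_x|$, so that the hypothesis $Index(E)=|G|\cdot 1$ forces every isotropy group to be trivial. (If one wishes to exclude a trivial case, note that for $X=\emptyset$ all three conditions hold vacuously.)
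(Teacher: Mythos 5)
Your argument is correct, and two of the three implications coincide with the paper's proof: (ii)$\Leftrightarrow$(iii) via Theorem~\ref{thm-main-1}, and (ii)$\Rightarrow$(i) by evaluating the index formula $Index(E)=\sum_{H<G}\frac{|G|}{|H|}\chi_{X_H}$ of Theorem~\ref{thm-comm} to force $|G_x|=1$ at every point. The one place you diverge is (i)$\Rightarrow$(ii): the paper simply cites Watatani's Proposition 2.8.1 (a free finite group action on $C(X)$ yields an expectation of index-finite type with index $|G|$), whereas you deduce it internally from the (ii)$\Rightarrow$(i) direction of Theorem~\ref{thm-comm}, observing that freeness makes $X_{\{\iota\}}=X$ the only nonempty stratum, every stratum (including the empty ones) closed, and the index formula collapse to $|G|\cdot 1$. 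Your route has the small advantage of making the corollary self-contained — it rests entirely on results already proved in the paper and needs no external reference — at the cost of invoking the partition-of-unity quasi-basis construction of Theorem~\ref{thm-comm} where a direct citation would do; the paper's choice is shorter but leans on Watatani. Both are complete proofs, and your key observation that $Index(E)(x)=|G|/|G_x|$ because $x$ lies in exactly the stratum $X_{G_x}$ is precisely the mechanism the paper uses as well.
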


\begin{proof} (i) $\Longrightarrow$ (ii) is proved in
\cite[Proposition 2.8.1]{Wa}.

  To show (ii) $\Longrightarrow$ (i),
  let $E$ be of index-finite type with $Index(E)=|G|$.
  Then from Theorem~\ref{thm-comm}, we have
  $$|G|=Index(E)=\sum_{H<G} \frac{|G|}{|H|}\chi_{X_H},$$ which
  implies that $H=\{\iota\}$ is the only subgroup of $G$ such that
  $X_H \neq \emptyset$. Hence $X=X_{\{\iota\}}$, that is, $G$ acts
  freely on $X$.   (ii) $\Longleftrightarrow$ (iii) comes from
  Theorem~\ref{thm-main-1}.
\end{proof}

\vskip 1pc
\section{Saturated actions by compact groups}
\vskip 1pc

\begin{notation}\label{notation-f}
Let $M$ be a $C^*$-algebra and $\alpha$ be an action
of a compact group $G$ on $M$. For $x,y\in M$,
 define continuous functions
$f_{x,y},\, f_{x,1}, \, f_{1,y}\in C(G,M)$ from $G$ to $M$ as
follows:
\begin{align*} f_{x,y}(t)&=x\alpha_t(y), \\
 f_{x,1}(t)  =x,  \ \
 & f_{1,y}(t)  = \alpha_t(y) \ \text{ for } t\in G.\end{align*}
Then it is easily checked that  $f_{x,y}= f_{x,1}* f_{1,y}$ and
$f_{x,y}^* =f_{y^*,x^*}.$
\end{notation}

\vskip 1pc\noindent Recall that $C(G,M)$ is a dense $*$-subalgebra
of $M\times_\alpha G$ with the multiplication and involution
defined by
\begin{align*} f*g(t)&=\int_{G} f(s)\alpha_s(g(s^{-1}t))ds,  \\
 f^*(t)&=\alpha_t(f(t^{-1})^*),\end{align*}  where $dg$ is
the normalized Haar measure (\cite[7.7]{Pe}, \cite[8.3.1]{Fi}).
Hence if $G$ is a finite group, $f_{x,y}$ can be written as
$f_{x,y}=\frac{1}{|G|}\sum_g x\alpha_g(y)\lambda_g$.

 If $\tilde M$  denotes the smallest unitization of
 $M$ (so $\tilde M=M$ if $M$ is unital),
 the function $$e:G\to \tilde M,\ \ e(s)= 1,\ \text{ for every }s\in G$$
 is a projection of the multiplier algebra
of $M\times_\alpha G$ (\cite{Ro}).

\vskip 1pc
\begin{prop}\label{prop-2} {\rm (\cite{Ro})}
Let $\alpha$ be an action of a compact group $G$ on a
$C^*$-algebra $M$. Then identifying $x\in M^\alpha$ and the
constant function in $C(G,M)$ with  the value $x$ everywhere we
see that
$$x\mapsto f_{x,1} :M^\alpha \to \ e(M\times_\alpha G)e$$ is an isomorphism
of $M^\alpha$ onto the hereditary subalgebra $\ e(M\times_\alpha
G)e$ of the crossed product $M\times_\alpha G$.
\end{prop}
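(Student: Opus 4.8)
The plan is to check directly that $x\mapsto f_{x,1}$ is an injective $*$-homomorphism from $M^\alpha$ onto $e(M\times_\alpha G)e$. That the latter is a hereditary subalgebra is the standard fact that a corner of a $C^*$-algebra cut down by a projection in its multiplier algebra is hereditary: if $0\le b\le ece$ with $b\in M\times_\alpha G$, then $(1-e)b(1-e)=0$, hence $(1-e)b^{1/2}=0$ and $b=ebe\in e(M\times_\alpha G)e$. I would either quote this or insert that one line.

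First I would record how $e$ acts on the dense $*$-subalgebra $C(G,M)$. From the convolution and involution formulas in the paragraph preceding the statement, $(e*f)(t)=\int_G\alpha_s\big(f(s^{-1}t)\big)\,ds$ and $(f*e)(t)=\int_G f(s)\,ds$, and $e^*=e$, $e*e=e$; in particular $e$ is a projection in the multiplier algebra, as asserted. For $x\in M^\alpha$ one then computes $(e*f_{x,1})(t)=\int_G\alpha_s(x)\,ds=x$ and $(f_{x,1}*e)(t)=\int_G x\,ds=x$, so $f_{x,1}=e*f_{x,1}*e\in e(M\times_\alpha G)e$. The same kind of computation gives $(f_{x,1}*f_{y,1})(t)=\int_G x\alpha_s(y)\,ds=xy$ for $x,y\in M^\alpha$, so $f_{x,1}*f_{y,1}=f_{xy,1}$, while $f_{x,1}^*=f_{1,x^*}=f_{x^*,1}$ by Notation~\ref{notation-f} together with $x^*\in M^\alpha$. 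Injectivity follows once one recalls that the canonical map $C(G,M)\hookrightarrow M\times_\alpha G$ is injective (standard for the full crossed product, e.g. via a regular representation induced from a faithful representation of $M$): $f_{x,1}=0$ forces the constant function with value $x$ to vanish, i.e. $x=0$. An injective $*$-homomorphism between $C^*$-algebras is isometric with closed range, so $x\mapsto f_{x,1}$ is an isomorphism onto its range.

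It remains to identify that range with $e(M\times_\alpha G)e$. For any $f\in C(G,M)$, the function $f*e$ is the constant function with value $x_f:=\int_G f(r)\,dr\in M$, whence $(e*f*e)(t)=\int_G\alpha_s(x_f)\,ds=:y_f$ is independent of $t$ and lies in $M^\alpha$ by translation invariance of the Haar measure; thus $e*f*e=f_{y_f,1}$. Conversely, for $x\in M^\alpha$ the constant function with value $x$ lies in $C(G,M)$, and applying the above to it gives $e*f*e=f_{x,1}$; hence $e*C(G,M)*e=\{f_{x,1}:x\in M^\alpha\}$. Since $C(G,M)$ is dense in $M\times_\alpha G$ and $e$ is a multiplier, $e(M\times_\alpha G)e$ is the norm closure of $e*C(G,M)*e$; but this set is the range of the $*$-homomorphism above, hence already closed, so $e(M\times_\alpha G)e=\{f_{x,1}:x\in M^\alpha\}$ and the map is the asserted isomorphism.

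The only non-routine ingredient is the injectivity of the embedding $C(G,M)\hookrightarrow M\times_\alpha G$, which I would cite rather than reprove; everything else is a short Haar-measure computation. The main points to be careful about are that $e*f*e$ really comes out as a genuinely constant, $M^\alpha$-valued function, and that $e(M\times_\alpha G)e$ is legitimately the norm closure of $e*C(G,M)*e$ so that the closed-range property of the homomorphism lets one pass from density to equality.
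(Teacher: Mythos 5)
Your argument is correct. Note that the paper gives no proof of Proposition~\ref{prop-2} at all --- it is simply quoted from Rosenberg \cite{Ro} --- so there is no in-paper proof to compare against; what you wrote is the standard argument, and it is in the same spirit as the computations the paper itself carries out on the dense subalgebra $C(G,M)$ in the proof of Proposition~\ref{prop-J} (e.g. $f_{a,b}*e=f_{aE(b),1}$ and $x*e*y=f_{x',y''}$). All steps check: $e*f_{x,1}=f_{x,1}*e=f_{x,1}$ for $x\in M^\alpha$, multiplicativity $f_{x,1}*f_{y,1}=f_{xy,1}$ and $f_{x,1}^*=f_{x^*,1}$ via invariance of the normalized Haar measure, injectivity from the (correctly cited, standard) injectivity of $C(G,M)\hookrightarrow M\times_\alpha G$, and surjectivity from $e*C(G,M)*e=\{f_{x,1}\mid x\in M^\alpha\}$ together with the facts that the corner $e(M\times_\alpha G)e$ is the closure of $e*C(G,M)*e$ and that an isometric $*$-homomorphism has closed range. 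The delicate points you single out --- that $e*f*e$ is genuinely constant with value in $M^\alpha$ (left invariance of the Haar measure), and the passage from density to equality through the closed corner --- are exactly the right ones, and you handle them correctly.
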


\vskip 1pc

\noindent The notion of saturated action is introduced by Rieffel
for a compact group action on a $C^*$-algebra, and we adopt the
following equivalent condition as the definition.

\vskip 1pc
\begin{dfn}(Rieffel, see \cite[7.1.9 Lemma]{Ph1}) \label{def-sat}
Let $M$ be a   $C^*$-algebra  and   $\alpha$ be an action of
compact group $G$ on $M$.  $\alpha$ is said to be
 \emph{saturated}  if the linear span of $\{f_{a,b}\mid a,b\in
M\}$ is dense in $M\times_\alpha G$ (see
Notation~\ref{notation-f}). We denote $$\mathcal
J_\alpha=\overline{span}\{f_{a,b}\mid a,b\in M\}.$$
\end{dfn}

\vskip 1pc

\begin{prop}\label{prop-J}
 Let $\alpha$ be an action of a compact group $G$ on a $C^*$-algebra
$M$. Then $\mathcal J_\alpha$  is the ideal of $M\times_\alpha G$
generated by the hereditary subalgebra $e(M\times_\alpha G)e$.
Moreover $\mathcal J_\alpha=\overline{span}\{f_{a,a^*}\in
C(G,M)\mid a\in M\}$.
\end{prop}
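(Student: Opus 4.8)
The plan is to prove the identity $\mathcal J_\alpha = I$, where $I$ denotes the closed two-sided ideal of $M\times_\alpha G$ generated by the hereditary subalgebra $e(M\times_\alpha G)e$, by establishing the two inclusions separately, and to dispose of the ``moreover'' clause first by polarization. For the polarization, expanding $c_k\alpha_t(c_k^*)$ with $c_k=a+i^{k}b^{*}$ and using $\sum_{k=0}^{3}i^{k}=\sum_{k=0}^{3}i^{2k}=0$ gives
$$f_{a,b}=\frac{1}{4}\sum_{k=0}^{3} i^{k}\, f_{\,a+i^{k}b^{*},\,(a+i^{k}b^{*})^{*}},$$
so $span\{f_{a,b}\mid a,b\in M\}=span\{f_{a,a^{*}}\mid a\in M\}$ (the reverse containment being trivial), whence the two closed spans coincide, which is the last assertion of the proposition.

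For $\mathcal J_\alpha\subseteq I$, fix $a\in M$ and put $w:=f_{a,1}\in C(G,M)\subseteq M\times_\alpha G$, the constant function with value $a$. Using $f_{a,1}^{*}=f_{1,a^{*}}$ and the convolution product one computes $w^{*}w=f_{1,a^{*}}*f_{a,1}=f_{c,1}$ with $c=\int_{G}\alpha_{s}(a^{*}a)\,ds$, and $c\in M^{\alpha}$ by invariance of Haar measure; hence $w^{*}w\in e(M\times_\alpha G)e\subseteq I$ by Proposition~\ref{prop-2}. On the other hand $ww^{*}=f_{a,1}*f_{1,a^{*}}=f_{a,a^{*}}$, and since $w^{*}w$ and $ww^{*}$ generate the same closed ideal of $M\times_\alpha G$ — indeed $(w^{*}w)^{1/n}$ lies in that ideal, so $w=\lim_{n}w(w^{*}w)^{1/n}$ does too, and therefore so does $ww^{*}$ — we obtain $f_{a,a^{*}}\in I$. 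By the polarization identity this yields $\mathcal J_\alpha\subseteq I$.

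For $I\subseteq\mathcal J_\alpha$ it suffices to check that $e(M\times_\alpha G)e\subseteq\mathcal J_\alpha$ and that $\mathcal J_\alpha$ is a closed two-sided ideal. For the first, if $x\in M^{\alpha}$ and $(e_\lambda)$ is an approximate unit of $M^{\alpha}$, then $f_{x,e_\lambda}$ is the constant function $xe_\lambda=f_{xe_\lambda,1}$ because $\alpha_{t}(e_\lambda)=e_\lambda$, and $\|f_{x,1}-f_{xe_\lambda,1}\|=\|f_{x-xe_\lambda,1}\|\le\int_{G}\|x-xe_\lambda\|\,ds=\|x-xe_\lambda\|\to 0$ in $M\times_\alpha G$; as each $f_{x,e_\lambda}\in\mathcal J_\alpha$, also $f_{x,1}\in\mathcal J_\alpha$, and by Proposition~\ref{prop-2} this says exactly $e(M\times_\alpha G)e\subseteq\mathcal J_\alpha$. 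For the second, $\mathcal J_\alpha$ is a closed $*$-invariant subspace since $f_{a,b}^{*}=f_{b^{*},a^{*}}$; and for $g\in C(G,M)$ a direct computation gives $g*f_{a,1}=f_{c,1}$ with $c=\int_{G}g(s)\alpha_{s}(a)\,ds\in M$, so $g*f_{a,b}=g*(f_{a,1}*f_{1,b})=f_{c,1}*f_{1,b}=f_{c,b}\in\mathcal J_\alpha$. By density of $C(G,M)$ in $M\times_\alpha G$ and continuity of multiplication, $(M\times_\alpha G)\mathcal J_\alpha\subseteq\mathcal J_\alpha$, and taking adjoints $\mathcal J_\alpha(M\times_\alpha G)\subseteq\mathcal J_\alpha$; thus $\mathcal J_\alpha$ is a closed ideal containing $e(M\times_\alpha G)e$, hence $I\subseteq\mathcal J_\alpha$.

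The step I expect to require the most care is the explicit manipulation of the convolution product and the $*$-operation on $C(G,M)$ — in particular getting the Haar-invariance right in $w^{*}w=f_{c,1}$ and in $g*f_{a,1}=f_{c,1}$ — and keeping track that $M$ is not assumed unital, which is precisely why the approximate-unit argument is needed to place the constant functions $f_{x,1}$ ($x\in M^{\alpha}$) into $\mathcal J_\alpha$. The fact that $w^{*}w$ and $ww^{*}$ generate the same closed ideal is standard but deserves the one-line justification indicated above.
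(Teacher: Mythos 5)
Your proof is correct, but on the inclusion $\mathcal J_\alpha\subseteq I$ it takes a genuinely different route from the paper. The paper computes $f_{a,b}*e=f_{aE(b),1}$ and then $f_{a,b}*e*f_{c,d}=f_{aE(b),E(c^*)d}$, so that $f_{ax,yd}\in I$ for $x,y\in M^\alpha$, and then removes the averaged factors by invoking the fact that $M^\alpha$ contains an approximate identity for all of $M$ --- a nontrivial property of compact group actions which the paper uses without further comment. You instead set $w=f_{a,1}$, observe $w^*w=f_{c,1}$ with $c=\int_G\alpha_s(a^*a)\,ds\in M^\alpha$ (hence $w^*w\in e(M\times_\alpha G)e$ by Proposition~\ref{prop-2}) and $ww^*=f_{a,a^*}$, invoke the standard fact that $w^*w$ and $ww^*$ generate the same closed ideal (your one-line justification via $w=\lim_n w(w^*w)^{1/n}$ is fine), and finish by polarization; the only approximate unit you need is that of $M^\alpha$ acting on its own elements, which is automatic, so your argument is slightly more self-contained. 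Your treatment of the reverse inclusion is essentially the paper's: the same convolution computation $g*f_{a,b}=f_{c,b}$ (together with $f_{a,b}^*=f_{b^*,a^*}$) shows $\mathcal J_\alpha$ is a closed ideal, and you place $e(M\times_\alpha G)e$ inside $\mathcal J_\alpha$ via the constant functions $f_{x,1}$, where the paper instead exhibits each generator $x*e*y$ of $I$ directly as $f_{x',y''}$. What the paper's computations buy is an explicit formula for the generators of $I$ as single functions $f_{x',y''}$; what your route buys is avoiding the unproved approximate-identity fact and putting the ``moreover'' identity to work as a tool rather than treating it as an afterthought. Both arguments are complete.
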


\begin{proof}
 We first show that  $\mathcal
J_\alpha$  is an ideal   of $M\times_\alpha G$. Let $x\in C(G, M)$
and  $ a,b \in M$.  Then $x*f_{a,b}\in \mathcal J_\alpha$. Indeed,
 \begin{align*}(x*f_{a,b})(t)&=\int
x(s)\alpha_s(f_{a,b}(s^{-1}t))ds\\  &=\int x(s)\alpha_s(a)\alpha_t(b)ds\\
&=\big(\int x(s) \alpha_s(a)ds \big)  \alpha_t(b),
\end{align*} hence $x*f_{a,b}=f_{c,b}\in \mathcal
J_\alpha,$ where $c=\int x(s) \alpha_s(a)ds\in M$.  Also
$f_{a,b}^* = f_{b^*,a^*}$ implies that
 $\mathcal J_\alpha={\mathcal J_\alpha}^*$ is  an ideal of $M\times_\alpha G$.

Let $\mathcal  J:=\overline{(M\times_\alpha G)e(M\times_\alpha
G)}$ be the closed ideal generated by $e(M\times_\alpha G)e$. Now
we show that $\mathcal J_\alpha\subset \mathcal J$. From
$$(f_{a,b}*e)(t)=\int f_{a,b}(s)\alpha_s(e(s^{-1}t))\,ds=\int
a\alpha_s(b)\,ds=a \int\alpha_s(b)ds,$$ we have
$f_{a,b}*e=f_{aE(b),1}$, where $E(b)=\int\alpha_s(b)ds\in
M^\alpha.$
 Hence for $a,b,c,$ and $d$ in $M$, we have
 \begin{align*} f_{a,b}*e *f_{c,d} &=
 (f_{a,b}*e)*(f_{d^*,c^*}*e)^*\\
  & = f_{aE(b), 1}*(f_{d^*E(c^*),1})^*\\
  & =f_{aE(b),1}*f_{1,E(c^*)d}\\
  & = f_{aE(b), E(c^*)d},
 \end{align*}
which means that $f_{ax,yd}\in \mathcal J$ for any $a,d\in M$ and
$x,y\in M^\alpha$.
   Since $M^\alpha$ contains an approximate identity for $M$,
 it  follows that   $f_{a,b} \in \mathcal J$ for $a,b\in
 A$.

  For the converse inclusion $\mathcal J\subset \mathcal J_\alpha$,
 note that if $x\in C(G,M)$, then $(x*e)(t)=\int x(s)ds$ for $t\in G$.
 With notations  $x'=\int x(s) ds$ and  $x'':=\int \alpha_s(x(s^{-1}))ds\,(\in M)$,
 we see that
  \begin{align*} (x*e*y)(t)&=\int (x*e)(s)\alpha_s(y(s^{-1}t))ds\\
   &=x' \int \alpha_s(y(s^{-1}t))ds\\
   &=x' \alpha_t(\int \alpha_{s}(y(s^{-1}))ds)\\
   &=x' \alpha_t(y'')\\
   &=f_{x',y''}(t)
  \end{align*}
belongs to $\mathcal J_\alpha$  for  $x,y\in C(G,M)$.

   Finally   the following polarization identity proves the last assertion.
    $$ a\alpha_t(b)=\frac{1}{4}\sum_{k=0}^3
 i^k(b+i^ka^*)^*\alpha_t(b+i^ka^*).$$
 \end{proof}

\vskip 1pc

\section{The gauge action $\gamma$ on a graph $C^*$-algebra}

  By a (directed) graph $E$ we mean a quadruple
 $E=(E^0,E^1,r,s)$ consisting of the vertex set $E^0$, the edge set $E^1$, and
 the range, source maps $r,s: E^1\to E^0$. If each vertex of $E$ emits
only finitely many edges $E$ is called \emph{row finite}  and a
row finite graph $E$ is \emph{locally finite} if each vertex
receives only finitely many edges. By $E^n$ we denote the set of
all finite paths $\alpha=e_1\cdots e_n$ ($r(e_{i})=s(e_{i+1}),
1\leq i\leq n-1$) of \emph{length} $n$ ($|\alpha|=n$). Each vertex
is regarded as a finite path of length $0$. Then  $E^*=\cup_{n\geq
0}E^n$ is the set of all finite paths and the maps $r$ and $s$
naturally extend to $E^*$.
  A vertex $v$ is called a {\it sink} if $s^{-1}(v)=\emptyset$ and a
\emph{source} if $r^{-1}(v)=\emptyset$.

If $E$ is a row finite graph, we call a family $\{s_e, p_v\mid
e\in E^1, v\in
 E^0\}$ of operators a {\it Cuntz-Krieger(CK) $E$-family} if $\{s_e\}_e$
 are partial isometries and
 $\{p_v\}_v$ are mutually orthogonal projections such that
$$s_e^* s_e =p_{r(e)}\ \text{ and }\
 p_v=\sum_{s(e)=v} s_e s_e^*\ \text{ if }  s^{-1}(v)\neq \emptyset.$$
It is now well known that there exists a $C^*$-algebra $C^*(E)$
generated by a {\it universal} CK $E$-family $\{s_e,p_v\mid e\in
E^1, v\in E^0\}$, in this case we simply write
$C^*(E)=C^*(s_e,p_v)$. For the definition  and basic properties of
graph $C^*$-algebras, see,  for example, \cite{BHRS,BPRS,
KPR,KPRR,Ra} among others. If $\alpha=\alpha_1\alpha_2\cdots
\alpha_{|\alpha|}\, (\alpha_i\in E^1)$ is a finite path, by
$s_\alpha$ we denote the partial isometry
$s_{\alpha_1}s_{\alpha_2}\cdots s_{\alpha_{|\alpha|}}$
$(s_v=s_v^*=p_v,\text{ for } v\in E^0)$.

We will consider only  locally finite graphs and it is  helpful to
note the following properties of graph $C^*$-algebras.

\vskip 1pc
\begin{remark}\label{remark-0} \begin{enumerate}

\item[(i)]  Let $C^*(E)=C^*(s_e,p_v)$ be the graph $C^*$-algebra
associated with a row finite graph $E$, and let $\alpha,\,\beta\in
E^*$ be finite paths in $E$. Then
 $$s_\alpha^* s_\beta= \left\{%
\begin{array}{ll}
     s_\mu^*, & \hbox{if $\ \alpha=\beta\mu$ } \\
     s_\nu, & \hbox{if $\ \beta=\alpha\nu$ } \\
     0, & \hbox{otherwise.} \\
\end{array}%
\right. $$ Therefore $ C^*(E)=\overline{span}\{s_\alpha
s_\beta^*\mid \alpha,\beta\in E^*\}.$

\item[(ii)] Note that $s_\alpha s_\beta^*=0$ for $\alpha,\beta\in
E^*$  with $r(\alpha)\neq r(\beta)$.

 \item[(iii)] If $\alpha,\beta,\mu,$ and
$\nu$ in $E^n$ are the paths of  same length, $$(s_\alpha
s_\beta^*)(s_\mu
 s_\nu^*)=\delta_{\beta,\mu} s_\alpha s_\nu^*.$$
  Thus for each $n\in \mathbb N$ and a vertex $v$ in a locally
  finite graph $E$, we see that $$\overline{  span}\{s_\alpha
  s_\beta^*\mid \alpha,\beta\in E^n\text{ and }
  r(\alpha)=r(\beta)=v\}$$ is a $*$-algebra which is isomorphic to the full matrix
  algebra $M_m=(M_m(\mathbb C))$, where $m=\big|\{\alpha\in
  E^n\mid r(\alpha)=v\}\big|.$
 \end{enumerate}
\end{remark}

\vskip 1pc

Recall that the \emph{gauge action}  $\gamma$ of $\mathbb T$ on
$C^*(E)=C^*(s_e, p_v)$ is given by
$$\gamma_z(s_e)=zs_e,\ \gamma_z(p_v)=p_v,\ \ z\in
\mathbb T.$$
  $\gamma$ is well defined by the universal property of
 the CK $E$-family $\{s_e, p_v\}$.
 Since
 $$\int_{\mathbb T} \gamma_z(s_\alpha
s_\beta^*)dz=\int_{\mathbb T} z^{|\alpha|-|\beta|}(s_\alpha
s_\beta^*)dz=0,\   |\alpha|\neq |\beta|,$$ one sees that
$$C^*(E)^\gamma = \overline{span}\{s_\alpha s_\beta^*\mid
\alpha,\beta\in E^* ,\ |\alpha|=|\beta|\}.$$

\vskip 1pc  If  $\bf Z$ denotes the following graph:
 \vskip 1pc\hskip 1cm \xy
/r0.4pc/:(0,0)*+{\bullet}="V0";
(8,0)*+{\bullet}="V1";(16,0)*+{\bullet}="V2";(24,0)*+{\bullet}="V3";
(32,0)*+{\bullet}="V4"; "V0";"V1"**\crv{}
?>*\dir{>}\POS?(.5)*+!D{}; "V1";"V2"**\crv{}
?>*\dir{>}\POS?(.5)*+!D{}; "V3";"V4"**\crv{}
?>*\dir{>}\POS?(.5)*+!D{};"V2";"V3"**\crv{}
?>*\dir{>}\POS?(.5)*+!D{}, (36,0)*+{\cdots  ,},(-4,0)*+{\cdots},
(-10,0)*+{\bf Z :},
(0,-2)*+{-2},(8,-2)*+{-1},(16,-2)*+{0},(24,-2)*+{1},(32,-2)*+{2},
\endxy

\vskip .5pc \noindent then   $C^*(\bf Z)$ is isomorphic to the
$C^*$-algebra $\mathcal K$ of compact operators on an infinite
dimensional separable Hilbert space,
  hence $C^*(\bf Z)$ is itself a simple AF algebra.
  But $C^*(\bf Z)^\gamma$ coincides with the commutative subalgebra
$\overline{span}\{s_\alpha s_\alpha^*\mid \alpha\in \bf Z^*\}$
which is far from being simple, and thus  we know that the
simplicity of $C^*(E)$ does not imply that of $C^*(E)^\gamma$ in
general.

\vskip 1pc In \cite{KP},  the  {\it Cartesian product} of two
graphs $E$ and $F$  is defined to be the graph $E\times
F=(E^0\times F^0,E^1\times F^1,r,s)$,
 where $r(e,f)=(r(e),r(f))$ and $s(e,f)=(s(e),s(f))$.
  Since the graph ${\bf Z}\times E$
  has no loops for any row-finite graph $E$,
  we know that $C^*(E)^\gamma$ is an AF algebra (\cite{KPR})
  by the following proposition.

\vskip 1pc
\begin{prop}\label{prop-PK} {\rm (\cite{KP})} Let $E$ be a row
finite graph with no sources. Then the following hold:
\begin{enumerate}
\item[(a)] $C^*(E)^\gamma$ is stably isomorphic
 to $C^*(E)\times_\gamma \mathbb T.$

\item[(b)] $C^*(E)\times_\gamma \mathbb T\cong C^*({\bf Z}\times
E)$.
\end{enumerate}

\end{prop}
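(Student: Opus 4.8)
The plan is to follow the groupoid route used in \cite{KP}; I will carry it out assuming for transparency that $E$ is locally finite with no sinks or sources (the general row-finite, no-sources case goes the same way, with extra bookkeeping for the finite paths that then sit in the unit space). First I would model $C^*(E)$ as the groupoid $C^*$-algebra $C^*(\mathcal G_E)$ of the path groupoid $\mathcal G_E$ of \cite{KPRR}, whose unit space is the space $E^\infty$ of infinite paths and whose elements are triples $(x,k,y)$ with $\sigma^m x=\sigma^n y$ and $k=m-n$. The length-difference data define a continuous $1$-cocycle $d\colon\mathcal G_E\to\mathbb Z$, $d(x,k,y)=k$, and under the identification $C^*(\mathcal G_E)\cong C^*(E)$ the gauge action $\gamma$ is exactly the action of $\mathbb T=\widehat{\mathbb Z}$ dual to $d$; in particular $C^*(E)^\gamma$ is the fixed-point algebra for that action, hence equals $C^*(R)$, where $R=d^{-1}(0)$ is the open (AF) subgroupoid of tail equivalence.

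For (b), I would invoke the skew-product/crossed-product duality for groupoid $C^*$-algebras: for the skew-product groupoid $\mathcal G_E(d)=\mathcal G_E\times_d\mathbb Z$, with unit space $E^\infty\times\mathbb Z$, source $(g,n)\mapsto(s(g),n)$ and range $(g,n)\mapsto(r(g),n+d(g))$, there is a natural isomorphism $C^*(\mathcal G_E(d))\cong C^*(\mathcal G_E)\times_{\widehat d}\mathbb T\cong C^*(E)\times_\gamma\mathbb T$. It then remains to see that $\mathcal G_E(d)$ is itself a path groupoid. Since $d$ assigns $1$ to every edge, the skew-product graph $E\times_d\mathbb Z$ has vertex set $E^0\times\mathbb Z$, edge set $E^1\times\mathbb Z$, and $s(e,n)=(s(e),n)$, $r(e,n)=(r(e),n+1)$; this is precisely the Cartesian product graph $\mathbf Z\times E$ of the paper, and one checks directly (using that $\mathbf Z\times E$ has no sources because $E$ has none) that its path groupoid is $\mathcal G_E(d)$. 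Combining, $C^*(E)\times_\gamma\mathbb T\cong C^*(\mathcal G_E(d))\cong C^*(\mathbf Z\times E)$, which is (b).

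For (a), I would note that $\mathbb Z$ acts freely on $\mathcal G_E(d)$ by translating the last coordinate, with quotient $\mathcal G_E$, and that the clopen set $U=E^\infty\times\{0\}$ is a full transversal: every orbit of $\mathcal G_E(d)$ meets $U$. This is exactly where the hypothesis of no sources enters --- given a unit $(x,n)$, one prepends a path of length $n$ to $x$ (for $n\ge 0$) or shifts $x$ forward $|n|$ times (for $n<0$) to get $g\in\mathcal G_E$ with $s(g)=x$ and $d(g)=-n$, that is, an arrow of $\mathcal G_E(d)$ from $(x,n)$ into $U$; prepending arbitrarily long paths is possible precisely because every vertex receives an edge. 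Hence $C^*(E)^\gamma=C^*(R)=C^*\!\big(\mathcal G_E(d)|_U\big)$, the reduction of $C^*(\mathcal G_E(d))$ to the full clopen set $U$, is a full corner of $C^*(\mathcal G_E(d))\cong C^*(E)\times_\gamma\mathbb T$, so the two are Morita equivalent; since the graph is countable all these $C^*$-algebras are separable, and the Brown--Green--Rieffel theorem gives the stable isomorphism $C^*(E)^\gamma\otimes\mathcal K\cong\big(C^*(E)\times_\gamma\mathbb T\big)\otimes\mathcal K$, which is (a).

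I expect the main obstacle to be the groupoid infrastructure behind the two displayed isomorphisms: the abelian duality $C^*(\mathcal G\times_c\mathbb Z)\cong C^*(\mathcal G)\times_{\widehat c}\mathbb T$, the identification of $C^*(d^{-1}(0))$ with the full-corner algebra $C^*(\mathcal G_E(d)|_U)$, and --- in the general no-sources case --- checking that $U$ is still a full transversal and that $\mathbf Z\times E$ really is the skew-product graph once the unit space also contains finite paths ending at sinks. A more hands-on alternative for (b) would avoid groupoids entirely: construct a Cuntz--Krieger $(\mathbf Z\times E)$-family inside $C^*(E)\times_\gamma\mathbb T$ from the generators $s_e,p_v$ and the canonical unitaries implementing $\gamma$, and conversely recover a CK $E$-family together with an implementing unitary inside $C^*(\mathbf Z\times E)$, matching the two universal properties; the delicate point there is the bookkeeping for the Cuntz--Krieger relations, after which (a) follows from the general principle that such an action is saturated, and hence its fixed-point algebra is Morita equivalent to the crossed product, exactly when $E$ has no sources.
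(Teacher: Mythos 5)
Your proposal is essentially the argument of Kumjian and Pask: the paper states Proposition~\ref{prop-PK} without proof, citing \cite{KP}, and (as its introduction notes) the cited proof proceeds exactly by the route you follow --- the degree cocycle on the path groupoid, the skew-product groupoid identified with the path groupoid of ${\bf Z}\times E$ to get (b), and the full clopen transversal $E^\infty\times\{0\}$ (where the no-sources hypothesis enters) giving the full corner/Morita equivalence and hence (a). So the proposal is correct and takes essentially the same approach as the paper's source.
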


\vskip 1pc
  Now we show that a gauge action is saturated.
  For this, note that the linear span of the continuous functions
  of the form
$$t\mapsto f(t) x,\ \ f\in C(G),\, x\in A$$  is dense in $C(G,
A)$ \cite[7.6.1]{Pe}. Hence by Remark~\ref{remark-0}(i), one sees
that
 \begin{eqnarray}\label{graph-alg} C^*(E)\times_\gamma \mathbb T=
  \overline{span}\{z^n s_\alpha s_\beta^*\mid \alpha,\ \beta\in E^*\, \ n\in \mathbb
  Z\}.
 \end{eqnarray}

\vskip 1pc

\begin{thm}\label{thm-main-2} Let $E$ be a locally finite graph with no
sinks and no sources. Then  the gauge action $\gamma$ on $C^*(E)$
is saturated.
\end{thm}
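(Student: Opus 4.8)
The goal is to show that $\mathcal J_\gamma = \overline{\operatorname{span}}\{f_{a,b}\mid a,b\in C^*(E)\}$ equals $C^*(E)\times_\gamma\mathbb T$. By Proposition~\ref{prop-J} it suffices to show that $\mathcal J_\gamma$, the ideal of $C^*(E)\times_\gamma\mathbb T$ generated by the hereditary subalgebra $e(C^*(E)\times_\gamma\mathbb T)e$, is all of $C^*(E)\times_\gamma\mathbb T$. In view of (\ref{graph-alg}), it is enough to prove that every generator $z^n s_\alpha s_\beta^*$ (for $\alpha,\beta\in E^*$, $n\in\mathbb Z$) lies in $\mathcal J_\gamma$. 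The plan is to build these generators out of the elements $f_{a,b}(t)=a\gamma_t(b)$ for suitable $a,b\in C^*(E)$, using the fact that $\gamma_t(s_\mu s_\nu^*)=t^{|\mu|-|\nu|}s_\mu s_\nu^*$ multiplies by a power of $t$ recording the length difference.

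First I would compute $f_{a,b}$ for $a=s_\alpha s_\mu^*$ and $b=s_\mu s_\beta^*$, where $\mu$ is chosen with $s(\mu)=r(\alpha)=r(\beta)$ (such $\mu$ exists since $E$ has no sources, so every vertex receives an edge, and we may take $\mu$ of any prescribed length by iterating). Using Remark~\ref{remark-0} one gets $ab = s_\alpha s_\mu^* s_\mu s_\beta^* = s_\alpha s_\beta^*$ and, more importantly, for the function values,
\[
f_{a,b}(t) = s_\alpha s_\mu^*\,\gamma_t(s_\mu s_\beta^*) = t^{|\mu|-|\beta|}\,s_\alpha s_\mu^* s_\mu s_\beta^* = t^{|\mu|-|\beta|}\, s_\alpha s_\beta^*.
\]
Thus $f_{a,b}$ is exactly the function $t\mapsto t^{|\mu|-|\beta|}s_\alpha s_\beta^*$, i.e.\ the element $z^{|\mu|-|\beta|}s_\alpha s_\beta^*$ of $C^*(E)\times_\gamma\mathbb T$ in the notation of (\ref{graph-alg}). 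Since $|\mu|$ can be made equal to any integer $\geq$ (the length needed to reach a vertex receiving edges), and in particular can be made to realize any desired exponent $n = |\mu|-|\beta| \geq -|\beta|$, this already puts $z^n s_\alpha s_\beta^*$ into $\mathcal J_\gamma$ for all sufficiently large $n$. To reach negative exponents and in fact all $n\in\mathbb Z$, I would instead also use paths $\mu$ on the source side: take $a = s_\alpha s_\nu^*$ with $r(\nu)=r(\alpha)$ and observe symmetrically (or pass to adjoints, using $f_{a,b}^* = f_{b^*,a^*}$ and that $\mathcal J_\gamma$ is a $*$-closed ideal) that $z^{-m}s_\alpha s_\beta^*\in\mathcal J_\gamma$ for all large $m$. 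Combining the two ranges of exponents, and using that $E$ has no sinks so that on the other side paths can also be prolonged, covers every $n\in\mathbb Z$.

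The main obstacle is purely a bookkeeping one: making sure that for given $\alpha,\beta$ and given target exponent $n$ one can actually choose an admissible $\mu$ (a path with $s(\mu)=r(\beta)$, arbitrary length) — this is where the hypotheses "no sinks" and "no sources" enter, guaranteeing that paths can be extended on either end indefinitely so that $|\mu|$ ranges over all sufficiently large integers. Once the generators $z^n s_\alpha s_\beta^*$ are shown to lie in $\mathcal J_\gamma$, density of their span in $C^*(E)\times_\gamma\mathbb T$ by (\ref{graph-alg}) and the fact that $\mathcal J_\gamma$ is closed finish the argument: $\mathcal J_\gamma = C^*(E)\times_\gamma\mathbb T$, so $\gamma$ is saturated. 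Finally I would remark that by Proposition~\ref{prop-PK}(a) this gives an alternative, direct proof that $C^*(E)^\gamma$ and $C^*(E)\times_\gamma\mathbb T$ are stably isomorphic.
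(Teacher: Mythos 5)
Your proof is correct in substance and follows essentially the same route as the paper: there, too, each generator $z^n s_\alpha s_\beta^*$ is realized as a single function $f_{a,b}$, with $a=s_\alpha s_\nu^*$, $b=s_\nu s_\beta^*$ for a path $\nu$ ending at $r(\alpha)$ in one case, and with a path prepended to $\alpha$ in the other case (where you instead pass to adjoints, which works equally well since $\mathcal J_\gamma$ is a $*$-closed ideal). One slip to fix: the auxiliary path must satisfy $r(\mu)=r(\alpha)=r(\beta)$, not $s(\mu)=r(\alpha)$ as written --- otherwise $s_\alpha s_\mu^*=0$ and your displayed identity fails; with $r(\mu)=r(\alpha)$ the computation is exactly right, the existence of such $\mu$ of every length $\geq 0$ is precisely what the no-sources hypothesis gives, and (as in the paper's own argument) the no-sinks hypothesis is never actually needed.
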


\begin{proof}  We show that   $ \mathcal
J_\gamma=C^*(E)\times_\gamma \mathbb T$. By (\ref{graph-alg})
    it suffices to see that
  $$z^n s_\alpha s_\beta^* \in
 \mathcal J_\gamma \ \text{ for all } \alpha,\beta\in E^*,\ n\geq 0$$
 (because $
z^{-n} s_\alpha s_\beta^*=(z^n s_\beta s_\alpha^*)^*$ for $n\geq 0$).

 Now fix $\alpha,\ \beta\in E^*$ and $n\geq 0$.
 Put $l=n-(|\alpha|-|\beta|).$ There are two cases.
\begin{enumerate}

\item[(i)] $l\geq 0$:
 One can choose a path $\mu$ such that  $|\mu|=l$ and
$r(\mu)=s(\alpha)$. Then
$$ z^n s_\alpha
s_\beta^* = z^{l+|\alpha|-|\beta| } s_\mu^*  s_\mu s_\alpha
s_\beta^*= s_\mu^*\gamma_z(s_{\mu \alpha} s_\beta^*)=
f_{s_\mu^*,s_{\mu \alpha} s_\beta^*}(z),$$ where the function
$f_{s_\mu^*,s_{\mu \alpha} s_\beta^*}$ belongs to $\mathcal
J_\gamma$.

 \item[(ii)] $l<0$: Choose a path $\nu$ with
  $|\nu|=|\beta|+n$ and $r(\nu)=r(\alpha)$. With $a=s_\alpha
  s_\nu^*$,  $b=s_\nu s_\beta^*$, we have $f_{a,b}\in \mathcal
J_\gamma$ and
  $$z^n s_\alpha s_\beta^*= s_\alpha s_\nu^*\gamma_z(s_\nu s_\beta^*)=
   f_{a,b}(z).$$
  \end{enumerate}
\end{proof}

\noindent {\em Acknowledgements.} The first author would like to
thank Hiroyuki Osaka and Tamotsu Teruya for valuable discussions.

\vskip 1pc

\vskip 2pc

Keywords: Finite dimensional Hopf $*$-algebra; saturated action;
conditional expectation of index-finite type.

\end{document}